\title[Hopf-Galois structures on Quaternionic extensions]{The structure of Hopf algebras giving Hopf-Galois structures on Quaternionic extensions}
\author{Stuart Taylor, Paul J. Truman}
\date{\today}
\Crefname{table}{Table}{Tables}
\newcommand{\perm}{\text{Perm}}
\newcommand{\stab}{\text{Stab}}
\newcommand{\End}{\text{End}}
\newcommand{\gal}{\text{Gal}}
\newcommand{\Q}{\mathbb{Q}}
\newenvironment{myitemize}{\begin{itemize}\setlength{\itemsep}{0pt}\setlength{\parskip}{0pt}\setlength{\parsep}{0pt}}{\end{itemize}}
\newtheorem{theorem}{Theorem}[section]
\newtheorem{lemma}[theorem]{Lemma}
\newtheorem{cor}[theorem]{Corollary}
\newtheorem{remark}[theorem]{Remark}
\newtheorem{example}[theorem]{Example}
\subjclass[2000]{Primary 11R33; Secondary 11S23}
\keywords{Hopf-Galois structure, Hopf Algebra, Galois extension, Wedderburn-Artin decomposition}
\begin{document}

\maketitle

\begin{abstract}
Let $L/F$ be a Galois extension of fields with Galois group isomorphic to the quaternion group of order $ 8 $. We describe all of the Hopf-Galois structures admitted by $ L/F $, and determine which of the Hopf algebras that appear are isomorphic as Hopf algebras. In the case that $ F $ has characteristic zero we also determine which of these Hopf algebras are isomorphic as $ F $-algebras and explicitly compute their Wedderburn-Artin decompositions. 
\end{abstract}

\section{Introduction}

Let $L/F$ be a finite Galois extension of fields with group $G$. The group algebra $ F[G] $, with its usual action on $ L $, is an example of a {\em Hopf-Galois structure} on the extension. If $H$ is a finite dimensional $F$-Hopf algebra, then we say that $H$ gives a Hopf Galois structure on $L/F$ if and only if the following conditions hold:
\begin{myitemize}
\item $L$ is an $H$-module algebra; that is: $L$ is an $H$-module with action $h(x)$ for $h\in H$ and $x\in L$ where, for $y\in L$, $h(xy)=\sum_{(h)}h_{(1)}(x)h_{(2)}(y)$ (Sweedler notation) and $h(1)=\epsilon(h)(1)$;\\
\item the $F$-linear map $j:L\otimes_F H\to\End_K(L)$ given by $j(l\otimes h)(x)=lh(x)$ for $l,x\in L$, $h\in H$, is bijective.
\end{myitemize}

We note that in this definition $ L/F $ may be taken to be merely an extension of commutative rings. However, in this paper we will be concerned exclusively with fields, specifically the case where $L/F$ is Galois (in the usual sense). 

Since a Hopf-Galois structure on an extension $ L/F $ consists of a Hopf algebra $ H $ and an action of $ H $ on $ L $, it is possible for distinct Hopf-Galois structures on $ L/F $ to involve Hopf algebras that are isomorphic, either as $ F $-Hopf algebras or as $ F $-algebras. These phenomena have recently been studied in papers such as \cite{TARP2017} and \cite{TARP2017_2}. In particular, \cite{TARP2017_2} studies in detail the Hopf-Galois structures admitted by a dihedral extension of fields of degree $ 2p $, where $ p $ is an odd prime. In this paper we perform a similar analysis of the Hopf-Galois structures admitted by a Galois extension of fields with Galois group isomorphic to $ Q_{8} $, the quaternion group of order $ 8 $. We call such extensions \emph{quaternionic}. In addition to continuing and complementing the work begun in the papers cited above, our results have applications in the study of the Hopf-Galois module structure of rings of algebraic integers in quaternionic extensions of local or global fields. Since such extensions have been important in the history of Galois module structure (see \cite{MAR1971}, for example), this has the potential to be a fruitful line of inquiry, which we intend to pursue in a future paper. 

%If $ L/F $ is an extension of local or global fields then the Hopf-Galois structures admitted by $ L/F $ have applications to questions of Galois module structure. Let $\mathfrak{O}_L$, $\mathfrak{O}_F$ be the rings of integers for $L$, $F$ respectively. Classically we are interested in studying the structure of $\mathfrak{O}_L$ as a module over its associated order in $ F[G] $:
%\[ \mathfrak{A}_{F[G]} = \{ z \in F[G] \,|\, z \mathfrak{O}_{L} \subset \mathfrak{O}_{L} \}. \]
%If $ H $ is some other Hopf algebra giving a Hopf-Galois structure on the extension then we may define $ \mathfrak{A}_{H} $ analogously to $ \mathfrak{A}_{F[G]} $, study the structure of $ \mathfrak{O}_{L} $ as a module over $ {\mathfrak A}_{H} $. There are examples of Galois extensions $ L/F $ of $ p $-adic fields for which $ \mathfrak{O}_{L} $ is not free over $ \mathfrak{A}_{F[G]} $ but is free over its associated order in some other Hopf algebra giving a Hopf-Galois structure on the extension \cite{BYOTT1997}.  

A theorem of Greither and Pareigis (\cite[Theorem 3.1]{GRE1987}, see also  \cite[Theorem 6.8]{CHI2000}) classifies all of the Hopf-Galois structures admitted by a finite separable extension of fields. We state it here in a weakened form applicable to finite Galois extensions. Consider the group of permutations on the underlying set of $G$, $\perm(G)$, and let $ \lambda : G \hookrightarrow \perm(G) $ be the left regular representation. A subgroup $N$ of $\perm(G)$ is said to be \emph{regular} if $|N|=|G|$, the stabiliser $\stab_N(g)=\{\eta\in N\,|\,\eta\cdot g=g\}$ is trivial for all $g\in G$, and $N$ acts transitively on $G$ (any two of these properties imply the third). The theorem of Greither and Pareigis states that there is a bijection between regular subgroups $N$ of $\perm(G)$ normalised by $\lambda(G)$ and Hopf Galois structures on $L/F$. Furthermore, if $ N $ is a regular subgroup of $ \perm(G) $ normalised by $\lambda(G) $ then the Hopf algebra giving the Hopf-Galois structure corresponding to $ N $ is $ L[N]^{G} $, the fixed ring of the group algebra $ L[N] $, where $ G $ acts on $ L[N] $ by acting on $ L $ as Galois automorphisms and on $ N $ by $ \,^{g} \eta = \lambda(g)n\lambda(g^{-1})$ for all $ \eta  \in N $, $ g \in G $. For a Hopf algebra $H=L[N]^G$ giving a Hopf Galois structure on $L/F$, we refer to $N$ as the \emph{underlying group} of $H$ and its isomorphism class as the \emph{type} of $H$, or the structure given by $ H $.

\begin{example}
Let $ \rho : G \hookrightarrow \perm(G) $ be the right regular representation. Suppose $g,h\in G$ and $x\in G$. Then $\lambda(g)\rho(h)[x]=gxh^{-1}=\rho(h)[gx]=\rho(h)\lambda(g)[x]$. That is: $\lambda(g)\rho(h)=\rho(h)\lambda(g)$ for all $g,h\in G$. Thus the action of $G$ on $\rho(G)$ is trivial, and so the Hopf algebra $L[\rho(G)]^G$ is in fact the group algebra $F[G]$ as in the original discussion. The Hopf Galois structure for which $N=\rho(G)$ is the underlying group is called the \emph{classical} structure.
\end{example}

\begin{example}
It is clear that the action of $G$ on $\lambda(G)$ gives $G$-orbits equal to the conjugacy classes. When $G$ is not abelian (so that the $\rho(G)\neq\lambda(G)$) the structure for which $N=\lambda(G)$ is the underlying group is called the \emph{canonical non-classical} structure.
\end{example}

The theorem of Greither and Pareigis is the cornerstone of almost all of the work concerned with the enumeration, description, and application of Hopf-Galois structures on separable extensions of fields. In particular, via a theorem of Byott \cite[Proposition 1]{BYOTT1996}, it reveals a connection between the theory of Hopf-Galois structures and the theory of {\em left skew braces}, which is described in detail in the appendix to \cite{BYO2018}. This appendix contains an enumeration of the Hopf-Galois structures admitted by a quaternion extension $ L/F $ \cite[Table A.1]{BYO2018}. In section 2 below we compute the regular subgroups corresponding to these Hopf-Galois structures, and in section 3 we determine which of the Hopf algebras that appear are isomorphic as Hopf algebras. In section 4 we study the $ F $-algebra structure of these Hopf algebras: under the assumption that $ F $ has characteristic zero, we find explicit bases for each Hopf algebra, compute their Artin-Wedderburn decompositions, and identify which are isomorphic as $ F $-algebras.\\

The first named author acknowledges funding support from the Faculty of Natural Sciences at Keele University. We are grateful to Prof. Alan Koch for his comments on an early draft of this paper, and to the anonymous referee for improvements to the exposition and interpretation of our results. 

\section{Structures on the extension}

Let $L/F$ be a Galois extension of fields with Galois group $G$ isomorphic to the quaternion group of order $8$. Let $G$ have generators $\sigma$ and $\tau$, that is
\begin{equation*}
G=\langle\sigma,\tau|\sigma^4=\tau^4=1,\sigma^2=\tau^2, \sigma\tau=\tau\sigma^{-1}\rangle.
\end{equation*}

There are 5 isomorphism types of groups of order 8: the elementary abelian group $C_2\times C_2\times C_2$, $C_4\times C_2$, the cyclic group $C_8$, the dihedral group $D_4$ and the quaternion group $Q_8$. As mentioned in the introduction, \cite[Table A.1]{BYO2018} includes a count of the Hopf-Galois structures admitted by $ L/F $, which we reproduce in \Cref{table_count} below. The same count appears in work of Crespo and Salguero  \cite[Table 3]{CRE2017}, as an application of an algorithm written in the computational algebra system Magma which gives all Hopf Galois structures on separable field extensions of a given degree. 

\begin{table}[h!]
\caption{The number of Hopf-Galois structures on a quaternionic extension}
\centering
\begin{tabular}{c|c}
Type & Number of structures\\
\hline
$C_2\times C_2\times C_2$ & 2\\
$C_4\times C_2$ & 6\\
$C_8$ & 6\\
$Q_8$ & 2\\
$D_4$ & 6\\
\end{tabular}
\label{table_count}
\end{table}

We now determine the regular subgroups of $ \perm(G) $ corresponding to these Hopf-Galois structures. We start with the subgroups corresponding to the Hopf-Galois structures of type $C_2\times C_2\times C_2$.

\begin{lemma} \label{lemma_Est}
Let $s,t\in\{\sigma,\tau\} $ with $ s\neq t$ and let $E_{s,t}$ be generated by $\lambda(s)\rho(t),\lambda(s^2)$, and $\lambda(t)\rho(st)$. Then $E_{s,t}$ is a regular subgroup of $\perm(G)$ that is normalized by $\lambda(G)$ and isomorphic to $C_2\times C_2\times C_2$. The groups $E_{\sigma,\tau}$ and $ E_{\tau,\sigma} $ are distinct, and are the underlying groups of the 2 Hopf-Galois structures of type $C_2\times C_2\times C_2$ on $ L/F $. 
\end{lemma}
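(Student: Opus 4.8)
The plan is to verify the claimed properties of $E_{s,t}$ directly, working inside $\perm(G)$ where $\lambda$ and $\rho$ commute (as shown in the first example). First I would record that each of the three generators is an involution: $\lambda(s)\rho(t)$ squares to $\lambda(s^2)\rho(t^2) = \lambda(s^2)\rho(s^2)$, and since $\sigma^2 = \tau^2$ is central in $G$ we have $\rho(s^2) = \lambda(s^2)^{-1}$... wait, more carefully, $\lambda(s^2)\rho(s^2)$ acts on $x$ by $s^2 x s^{-2} = x$ since $s^2$ is central, so indeed $(\lambda(s)\rho(t))^2 = \mathrm{id}$; similarly for $\lambda(t)\rho(st)$, using $(st)^2 = stst$ and the relations in $Q_8$ to see this equals $s^2 = t^2$; and $\lambda(s^2)^2 = \lambda(s^4) = \mathrm{id}$. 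Next I would check the three generators commute pairwise: $\lambda(s^2)$ is central in $\lambda(G)$ (as $s^2$ is central in $G$) hence commutes with everything in sight, and $(\lambda(s)\rho(t))(\lambda(t)\rho(st)) = \lambda(st)\rho(t\cdot st)$ versus $(\lambda(t)\rho(st))(\lambda(s)\rho(t)) = \lambda(ts)\rho(st\cdot t)$ — these agree iff $\lambda(st) = \lambda(ts)\cdot\lambda(s^2)^{\pm 1}$ appropriately, which follows from $ts = st\cdot s^{-2} = st\cdot s^2$ and $st\cdot t = s t^2 = t\cdot st \cdot (\text{something central})$; I would carry out this bookkeeping explicitly using $\sigma\tau = \tau\sigma^{-1}$. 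This establishes that $E_{s,t}$ is a homomorphic image of $C_2\times C_2\times C_2$.

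**Next** I would show $|E_{s,t}| = 8$, equivalently that the three involutions are independent. One clean way: evaluate the subgroup's action on the identity element $1 \in G$. We have $\lambda(s)\rho(t)[1] = st^{-1}$, $\lambda(s^2)[1] = s^2$, $\lambda(t)\rho(st)[1] = t(st)^{-1} = ts^{-1}t^{-1} = s$ (using centrality of $t^2$ and the $Q_8$ relations), and the eight products of generators send $1$ to eight distinct elements of $G$ — indeed this simultaneously proves regularity, since an order-$8$ subgroup of $\perm(G)$ that acts transitively on the $8$-element set $G$ is automatically regular (transitivity plus $|N| = |G|$ forces trivial stabilisers). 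So the transitivity computation does double duty: it pins down $|E_{s,t}| = 8$ (hence the isomorphism type is exactly $C_2\times C_2\times C_2$, the only order-$8$ group of exponent $\le 2$) and establishes regularity.

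**For normalisation by $\lambda(G)$**, since $\lambda(G)$ is generated by $\lambda(\sigma), \lambda(\tau)$, it suffices to conjugate each of the three generators of $E_{s,t}$ by $\lambda(\sigma)$ and by $\lambda(\tau)$ and check the result lands back in $E_{s,t}$. Conjugation by $\lambda(g)$ fixes $\lambda(s^2)$ (central) and sends $\lambda(s)\rho(t) \mapsto \lambda(g s g^{-1})\rho(t)$ since $\lambda(g)$ commutes with $\rho(t)$; as $gsg^{-1} \in \{s, s^{-1}\} = \{s, s^3\}$ and $\lambda(s^3) = \lambda(s)\lambda(s^2)$, this is either $\lambda(s)\rho(t)$ or $\lambda(s)\rho(t)\cdot\lambda(s^2)$, both in $E_{s,t}$; similarly for the third generator. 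I would tabulate these six conjugations. Finally, **for $E_{\sigma,\tau} \ne E_{\tau,\sigma}$**, I would exhibit an element distinguishing them — e.g. compare where each group sends $1$, or simply note $\lambda(\sigma)\rho(\tau) \in E_{\sigma,\tau}$ but checking its action on $1$ gives $\sigma\tau^{-1} = \sigma\tau$ whereas every element of $E_{\tau,\sigma}$ sends $1$ into a set not containing $\sigma\tau$; alternatively count how many of these two regular subgroups can have $\lambda(G)$-orbits equal to conjugacy classes. That there are exactly $2$ structures of this type is given by \Cref{table_count}, so exhibiting $2$ distinct ones suffices. The main obstacle I anticipate is purely organisational: keeping the $Q_8$ relations (especially the non-commutativity $\sigma\tau = \tau\sigma^{-1}$ and the identification $\sigma^2 = \tau^2$) straight through the products $st$, $(st)^{-1}$, $st\cdot t$, etc. — there is no conceptual difficulty, but it is easy to drop a central factor of $\sigma^2$, so I would set up a short multiplication table for $G$ first and reference it throughout.
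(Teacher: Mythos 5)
Your proposal is correct and follows essentially the same route as the paper: direct verification that the generators are commuting involutions, regularity via the orbit of $1_G$, normalisation by conjugating generators using the commutativity of $\lambda(G)$ and $\rho(G)$, distinctness via an explicit distinguishing element, and an appeal to the count in \Cref{table_count}. (One trivial slip: $t(st)^{-1} = t\,t^{-1}s^{-1} = s^{-1}$, not $s$; this does not affect the argument.)
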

\begin{proof}
The elements of $E_{s,t}$ are 
\begin{equation*}
1,\,\,\lambda(s^2),\,\,\lambda(s)\rho(t),\,\,\lambda(s^{-1})\rho(t),\,\,\lambda(t)\rho(st),\,\,\lambda(t^{-1})\rho(st),\,\,\lambda(st)\rho(s),\,\,\lambda((st)^{-1})\rho(s).
\end{equation*}
All of the non-identity elements above have order 2, so $E_{s,t}$ is isomorphic to $C_2\times C_2\times C_2$. It is clear that $ E_{s,t} \subset \perm(G) $ and $E_{s,t}\cdot1_{G}=G$; hence $E_{s,t}$ is a regular subgroup of $\perm(G)$. To show that $E_{s,t}$ is normalized by $\lambda(G)$, it is sufficient to show that it is normalized by $ \lambda(s) $ and $ \lambda(t) $. Using the fact that $\lambda(G)$ and $\rho(G)$ commute inside $ \perm(G) $ we have for example
\begin{align*}
\,^{s} \lambda(s)\rho(t) & = \lambda(sss^{-1})\rho(t) = \lambda(s)\rho(t) \\
\,^{t} \lambda(s)\rho(t) & = \lambda(tst^{-1})\rho(t) = \lambda(s^{-1})\rho(t).
\end{align*}
Similar calculations apply to the other elements, and so $E_{s,t}$ is normalized by $\lambda(G)$. Finally, we have $E_{s,t}\neq E_{t,s}$ since $ \lambda(t)\rho(s) $ lies in $ E_{t,s} $ but not in $ E_{s,t} $. Referring to \Cref{table_count} we see that $ E_{\sigma,\tau} $ and $ E_{\tau,\sigma} $ are the underlying groups of the two Hopf Galois structures of type $C_2\times C_2\times C_2$ on $ L/F $.
\end{proof}

We now find the subgroups corresponding to the Hopf-Galois structures of type $C_4\times C_2$ using a similar technique.

\begin{lemma} \label{lemma_Ast}
Let $s,t\in\{\sigma,\tau,\sigma\tau\} $ with $ s\neq t$ and let $A_{s,t}$ be generated by the permutations $\lambda(s)$ and $\rho(t)$. Then $A_{s,t}$ is a regular subgroup of $\perm(G)$ that is normalized by $\lambda(G)$ and isomorphic to $C_4\times C_2$. The 6 choices of the pair $ s,t $ yield distinct groups, and these are the underlying groups of the 6 structures of type $C_4\times C_2$ on $ L/F $.
\end{lemma}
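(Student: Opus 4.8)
The plan is to follow the same strategy as the proof of \Cref{lemma_Est}. Throughout I would use the following standard facts about $Q_8$: each of $\sigma,\tau,\sigma\tau$ has order $4$; all three square to the unique central involution $z=\sigma^2=\tau^2$; the subgroups $\langle\sigma\rangle,\langle\tau\rangle,\langle\sigma\tau\rangle$ are pairwise distinct (they are the three maximal subgroups of $Q_8$); and the conjugacy class of each $s\in\{\sigma,\tau,\sigma\tau\}$ is $\{s,s^{-1}\}=\{s,sz\}$. Also, since $\lambda(G)$ and $\rho(G)$ commute in $\perm(G)$, the generators $\lambda(s)$ and $\rho(t)$ of $A_{s,t}$ commute, so $A_{s,t}$ is abelian.

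First I would pin down $A_{s,t}$ as a set. Because $\lambda(s)^2=\lambda(z)=\rho(z)=\rho(t)^2$, the eight permutations $\lambda(s)^i$ and $\lambda(s)^i\rho(t)$ for $0\le i\le 3$ are closed under multiplication, so they exhaust $A_{s,t}$ provided they are distinct; and they are, since they send $1_G$ to $s^i$ and to $s^it^{-1}$ respectively, which are eight distinct elements because $t\notin\langle s\rangle$ (the three elements of $\{\sigma,\tau,\sigma\tau\}$ generate distinct cyclic subgroups) forces the cosets $\langle s\rangle$ and $\langle s\rangle t^{-1}$ to be disjoint. This shows at once that $|A_{s,t}|=8$ and that the orbit of $1_G$ under $A_{s,t}$ is all of $G$; together with $A_{s,t}\subseteq\perm(G)$ this gives regularity. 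Finally $A_{s,t}$ is abelian of order $8$, is a quotient of $\langle\lambda(s)\rangle\times\langle\rho(t)\rangle\cong C_4\times C_4$ (hence of exponent dividing $4$), and contains the order-$4$ element $\lambda(s)$; so $A_{s,t}\cong C_4\times C_2$.

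For the normalisation by $\lambda(G)$ it suffices to conjugate the generators $\lambda(s),\rho(t)$ by $\lambda(\sigma)$ and $\lambda(\tau)$. Conjugation by any $\lambda(g)$ fixes $\rho(t)$ since $\lambda(G)$ and $\rho(G)$ commute, and sends $\lambda(s)$ to $\lambda(gsg^{-1})$, which by the conjugacy-class computation equals $\lambda(s)$ or $\lambda(s^{-1})=\lambda(s)^{-1}$; in either case it lies in $\langle\lambda(s)\rangle\subseteq A_{s,t}$. Thus $\,^{g}A_{s,t}\subseteq A_{s,t}$ for all $g\in G$, and equality follows by finiteness, so $A_{s,t}$ is normalised by $\lambda(G)$.

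It remains to show the six groups $A_{s,t}$ are distinct and exhaust the structures of type $C_4\times C_2$. The subgroup $A_{s,t}\cap\lambda(G)$ contains $\langle\lambda(s)\rangle$, of order $4$; were it larger it would equal $A_{s,t}$, forcing $\rho(t)\in\lambda(G)$ and hence $t\in Z(G)$, contradicting that $t$ has order $4$; so $A_{s,t}\cap\lambda(G)=\langle\lambda(s)\rangle$, and symmetrically $A_{s,t}\cap\rho(G)=\langle\rho(t)\rangle$. Hence $A_{s,t}=A_{s',t'}$ would give $\langle s\rangle=\langle s'\rangle$ and $\langle t\rangle=\langle t'\rangle$, whence $s=s'$ and $t=t'$ because the three elements of $\{\sigma,\tau,\sigma\tau\}$ generate pairwise distinct subgroups; so the six ordered pairs yield six distinct regular subgroups normalised by $\lambda(G)$ of type $C_4\times C_2$, and since \Cref{table_count} records exactly six Hopf--Galois structures of this type, these are precisely their underlying groups. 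I do not anticipate a genuine obstacle; the only point needing care is the bookkeeping that fixes the element list of $A_{s,t}$, and thereby its order and its intersections with $\lambda(G)$ and $\rho(G)$ — the rest is a direct transcription of the argument for $E_{s,t}$.
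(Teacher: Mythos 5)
Your proof is correct and follows essentially the same route as the paper's: identify the isomorphism type $C_4\times C_2$ from the commuting order-$4$ generators with a common square, verify regularity via the orbit of $1_G$, check normalisation by conjugating the generators, and then match the six distinct groups against the count in \Cref{table_count}. Your distinctness argument via $A_{s,t}\cap\lambda(G)=\langle\lambda(s)\rangle$ and $A_{s,t}\cap\rho(G)=\langle\rho(t)\rangle$ is just a more explicit version of the paper's remark that $A_{s,t}$ is the only such group containing both $\lambda(s)$ and $\rho(t)$.
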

\begin{proof}
We have $\langle\rho(t),\lambda(s)\rangle\cong C_4\times C_2$ since $\rho(t)$ and $\lambda(s)$ are both of order 4, commute with each other, and share the same square.
It is clear that $ A_{s,t} \subset \perm(G) $ and that for $g,h\in G$ we have $\lambda(g)\rho(h)\cdot1_{G}=gh^{-1}$; hence $A_{s,t}$ is a regular subgroup of $\perm(G)$. The verification that it is normalized by $ \lambda(G) $ is very similar to the verification in \Cref{lemma_Est}, using the fact that $\rho(G)$ and $\lambda(G)$ commute inside $ \perm(G) $. To show that the six choices of the pair $ s,t $ yield distinct groups, note that for each such pair the group $ A_{s,t} $ is the only one that contains $ \lambda(s) $ and $ \rho(t) $. Hence, by \Cref{table_count}, the groups $A_{s,t}$ are the underlying groups of the 6 Hopf Galois structures of type $C_4\times C_2$.

%%PART OF OLD ARGUMENT HERE:
%consider for example $\lambda(s)$: we have $\lambda(s)\in A_{s,t}\cap A_{s,st}$ but $\lambda(s)\notin A_{t,s}\cup A_{t,st} \cup A_{st,s} \cup A_{st, t}$. Moreover $\rho(t)\in A_{s,t}$ but $\rho(t) \notin A_{s,st}$. Repeating this argument for $\lambda(t)$ and $\lambda(st)$ shows that each of the six choices of the pair $s,t$ yields a distinct group.
\end{proof}

The subgroups corresponding to the Hopf-Galois structures of type $ C_{8} $ cannot be described in terms of combinations of elements from $ \lambda(G) $ and $ \rho(G) $, since the order of any such element is at most $ 4 $. 

\begin{lemma} \label{lemma_Cst}
Let $s,t\in\{\sigma,\tau,\sigma\tau\} $ with $ s\neq t$ and let $C_{s,t}$ be generated by the permutation $\eta_{s,t} $ defined in cycle notation by
\begin{equation*}
\eta_{s,t} = (1\,\,s\,\,t\,\,(st)^{-1}\,\,s^2\,\,s^{-1}\,\,t^{-1}\,\,(st)).
\end{equation*}
Then $C_{s,t}$ is a regular subgroup of $\perm(G)$ that is normalized by $\lambda(G)$ and isomorphic to $C_8$. The 6 choices of the pair $ s,t $ yield distinct groups, and these are the underlying groups of the 6 structures of type $C_8 $ on $ L/F $.
\end{lemma}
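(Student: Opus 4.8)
The plan is to follow the template established in Lemmas~\ref{lemma_Est} and~\ref{lemma_Ast}: verify that $\eta_{s,t}$ has order $8$ so that $C_{s,t}\cong C_8$, check regularity, verify that $C_{s,t}$ is normalized by $\lambda(G)$, and finally argue that the six pairs $(s,t)$ give distinct subgroups and hence (by \Cref{table_count}) exhaust the structures of type $C_8$.

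First I would observe that $\eta_{s,t}$ is written as a single $8$-cycle on the underlying set of $G$, so it has order $8$ and $C_{s,t}=\langle\eta_{s,t}\rangle\cong C_8$ immediately. Regularity is then essentially free: $\eta_{s,t}$ permutes the eight elements of $G$ in a single cycle starting at $1_G$, so $C_{s,t}\cdot 1_G=G$, and since $|C_{s,t}|=8=|G|$ the group is regular (transitivity plus order forces triviality of stabilizers). The distinctness of the six groups I would handle the same way as before: for each pair $(s,t)$ the generator $\eta_{s,t}$ is explicitly determined, and one checks that the cyclic subgroup it generates is not repeated --- for instance by noting that $\eta_{s,t}$ sends $1$ to $s$ and $s$ to $t$, which already pins down the ordered pair $(s,t)$, so no two of the six cyclic groups can coincide (alternatively, a group of order $8$ has a unique pair of generators of the form $\eta^{\pm 1}$ arising in this way). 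Then \Cref{table_count} gives that these are the underlying groups of all $6$ structures of type $C_8$.

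The main obstacle is verifying that $C_{s,t}$ is normalized by $\lambda(G)$. Unlike the previous two lemmas, $\eta_{s,t}$ is not visibly a product of elements of $\lambda(G)$ and $\rho(G)$, so the commuting trick does not apply directly; instead I would compute $\lambda(g)\eta_{s,t}\lambda(g)^{-1}$ as a permutation and check it lies in $\langle\eta_{s,t}\rangle$, and it suffices to do this for $g=s$ and $g=t$ (generators of $G$). Concretely, conjugating an $8$-cycle $(x_0\,x_1\,\cdots\,x_7)$ by $\lambda(g)$ yields the cycle $(gx_0\,gx_1\,\cdots\,gx_7)$, so the computation reduces to checking that the sequence $\bigl(g,\,gs,\,gt,\,g(st)^{-1},\,gs^2,\,gs^{-1},\,gt^{-1},\,g(st)\bigr)$ is a cyclic shift (or reversal of a cyclic shift) of the original sequence $\bigl(1,s,t,(st)^{-1},s^2,s^{-1},t^{-1},st\bigr)$, using the quaternion relations in $G$; in fact one expects $\,^{s}\eta_{s,t}=\eta_{s,t}$ or $\eta_{s,t}^{-1}$ and similarly for $t$, exactly as the automorphism $\lambda(g)(-)\lambda(g)^{-1}$ of $C_8$ must be $\pm 1$ or another automorphism of order dividing $|G|$. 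I would present this as a short table of the relevant products in $G$ and conclude that $\lambda(G)$ normalizes $C_{s,t}$.

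Once these four points are in hand the lemma follows. The one place where I would be careful is making sure the cycle notation for $\eta_{s,t}$ really does list eight \emph{distinct} elements of $G$ for each admissible pair $(s,t)$ --- i.e.\ that $\{1,s,t,(st)^{-1},s^2,s^{-1},t^{-1},st\}$ has eight elements --- which again is a finite check using the multiplication in $Q_8$, most easily seen by noting $s^2=t^2=(st)^2$ is the unique central element of order $2$ and the remaining six are the three pairs $\{s,s^{-1}\}$, $\{t,t^{-1}\}$, $\{st,(st)^{-1}\}$.
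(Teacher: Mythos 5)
Your overall strategy is the same as the paper's (check the single $8$-cycle gives a regular $C_8$, conjugate the cycle by $\lambda(s)$ and $\lambda(t)$ to verify normalization, then count via \Cref{table_count}), and your added care that the eight entries of the cycle really are the eight distinct elements of $G$ is a worthwhile point the paper glosses over. However, there are two places where the proof as described would go wrong.

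First, the normalization check. Your stated goal --- compute $\lambda(g)\eta_{s,t}\lambda(g)^{-1}$ as the relabelled cycle $\bigl(g\,\,gs\,\,gt\,\,\cdots\bigr)$ and verify it lies in $\langle\eta_{s,t}\rangle$ --- is correct, but your concrete recipe is not: you reduce the check to deciding whether the new sequence is ``a cyclic shift (or reversal of a cyclic shift)'' of the old one, i.e.\ whether the conjugate equals $\eta_{s,t}^{\pm 1}$. That test fails for $g=s$: the paper's computation gives $\lambda(s)\eta_{s,t}\lambda(s)^{-1}=\eta_{s,t}^{3}$, which is an $8$-cycle that is neither a rotation nor a reversal of the written cycle (while $\lambda(t)$ does fix $\eta_{s,t}$). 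So you must compare the conjugate against all four odd powers $\eta_{s,t},\eta_{s,t}^{3},\eta_{s,t}^{5},\eta_{s,t}^{7}$; as literally described, your check would wrongly conclude that $\lambda(s)$ does not normalize $C_{s,t}$.

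Second, distinctness of the six groups. Knowing that $\eta_{s,t}$ sends $1\mapsto s\mapsto t$ pins down the ordered pair $(s,t)$ and hence shows the six \emph{generators} are pairwise distinct, but that does not show the six cyclic \emph{groups} are distinct: a cyclic group of order $8$ has four generators, so you could have $\eta_{s',t'}=\eta_{s,t}^{k}$ for some odd $k\neq 1$ without the generators coinciding. This is exactly why the paper phrases the check as ``each of the 6 choices gives a permutation that differs from all powers of those of the other choices.'' The repair is easy --- e.g.\ observe that $\eta_{s,t}^{3}$ and $\eta_{s,t}^{5}$ send $1$ to $(st)^{-1}$ and $s^{-1}$ respectively, neither of which lies in $\{\sigma,\tau,\sigma\tau\}$, and $\eta_{s,t}^{7}$ sends $1\mapsto st$ but then $st\mapsto t^{-1}\notin\{\sigma,\tau,\sigma\tau\}$ --- but some such argument is needed; the inference from distinct generators to distinct subgroups is a gap.
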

\begin{proof}
It is clear that $C_{s,t}$ is a subgroup of $ \perm(G) $ isomorphic to $C_8$. Moreover, we have $C_{s,t}\cdot 1_{G}=G$ since $\eta_{s,t}^k\cdot1_{G}=1_{G}$ if and only if $k \equiv 0 \pmod{8}$. Thus $C_{s,t}$ is a regular subgroup of $\perm(G)$. To show that $C_{s,t}$ is normalized by $\lambda(G)$, it is sufficient to show that it is normalized by $ \lambda(s) $ and $ \lambda(t) $. We have
\begin{align*}
\lambda(s)&\eta_{s,t}\lambda(s^{-1})\\
&=(1\,\,s\,\,s^2\,\,s^{-1})(t\,\,st\,\,t^{-1}\,\,(st)^{-1})(1\,\,s\,\,t\,\,(st)^{-1}\,\,s^2\,\,s^{-1}\,\,t^{-1}\,\,st)(1\,\,s^{-1}\,\,s^2\,\,s)(t\,\,(st)^{-1}\,\,t^{-1}\,\,st)\\
&=(1\,\,(st)^{-1}\,\,t^{-1}\,\,s\,\,s^2\,\,st\,\,t\,\,s^{-1})\\
&=\eta_{s,t}^3,
\end{align*}
and similarly, $\lambda(t)\eta_{s,t}\lambda(t^{-1})=\eta_{s,t}$. Therefore $C_{s,t}$ is normalized by $\lambda(G) $. It may be verified that each of the 6 choices of the pair $s,t$ gives a permutation that differs from all powers of those of the other choices. Hence, by \Cref{table_count}, the groups $C_{s,t}$ are the underlying groups of the 6 Hopf Galois structures of type $C_8$.
\end{proof}

Having found the abelian underlying groups of the corresponding Hopf Galois structures on our extension $L/F$ we now find the structures of quaternionic type which we saw earlier.

\begin{lemma} 
$\rho(G)$ and $\lambda(G)$ are the underlying groups of the the two Hopf-Galois structures of type $Q_8$.
\end{lemma}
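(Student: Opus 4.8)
The plan is to recognize that the two required structures have, in effect, already been produced in the two Examples: the classical structure and the canonical non-classical structure. First I would record that $\rho(G)$ and $\lambda(G)$ are both regular subgroups of $\perm(G)$. For $\lambda(G)$ this is built into the definition of the left regular representation; for $\rho(G)$ it follows in the same way, and indeed the first Example identifies $\rho(G)$ as the underlying group of the classical structure. Since $\lambda$ and $\rho$ are injective homomorphisms, each of $\lambda(G)$ and $\rho(G)$ is isomorphic to $G \cong Q_8$, so each has type $Q_8$.

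Next I would verify that each subgroup is normalized by $\lambda(G)$. This is trivial for $\lambda(G)$. For $\rho(G)$ it is immediate from the identity $\lambda(g)\rho(h) = \rho(h)\lambda(g)$ for all $g,h \in G$ noted in the first Example: $\lambda(G)$ and $\rho(G)$ commute elementwise in $\perm(G)$, so $\lambda(G)$ certainly normalizes $\rho(G)$. By the theorem of Greither and Pareigis, $\rho(G)$ and $\lambda(G)$ therefore are the underlying groups of Hopf-Galois structures on $L/F$, both of type $Q_8$.

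Finally I would check that these two structures are genuinely different and that there are no others of this type. If $\lambda(g) = \rho(h)$ in $\perm(G)$ then evaluating at $1_G$ gives $g = h^{-1}$ and hence $gx = xg$ for all $x \in G$, so $g \in Z(G) = \langle \sigma^2 \rangle$; as this has order $2 < |G|$, the subgroups $\lambda(G)$ and $\rho(G)$ cannot coincide. (Equivalently, this is the remark in the second Example that $\rho(G) \neq \lambda(G)$ because $G$ is non-abelian.) Thus $\rho(G)$ and $\lambda(G)$ underlie two distinct Hopf-Galois structures of type $Q_8$, and since \Cref{table_count} records that there are exactly $2$ such structures, these account for all of them. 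There is no real obstacle here; the one place to take a moment's care is the verification that $\rho(G) \neq \lambda(G)$, which rests on the non-abelianness of $Q_8$.
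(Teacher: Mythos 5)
Your proposal is correct and follows the same route as the paper: both rest on the observations that $\lambda(G)$ and $\rho(G)$ are regular subgroups normalized by $\lambda(G)$, that they are distinct precisely because $G$ is non-abelian, and that \Cref{table_count} then shows they exhaust the two structures of type $Q_8$. You simply spell out the regularity, normalization, and distinctness checks that the paper leaves implicit.
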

\begin{proof}
As $G$ is non-abelian, $\rho(G) $ and $ \lambda(G)$ are distinct regular subgroups of $ \perm(G) $ normalized by $ \lambda(G) $. By \Cref{table_count}, they are the underlying groups of the 2 Hopf-Galois structures of type $Q_{8}$. 
\end{proof}

Finally, the subgroups corresponding to the Hopf-Galois structures of type $D_4$, the dihedral group of order 8, have a similar description to the groups $ E_{s,t} $ and $ A_{s,t} $.

\begin{lemma} \label{lemma_Ds}
Let $s,t\in\{\sigma,\tau,\sigma\tau\}$ with $ s \neq t $. Let $D_{s,\lambda}$ be generated by $\lambda(s)$ and $\lambda(t)\rho(s)$, and let $D_{s,\rho}$ be generated by $\rho(s)$ and $\lambda(s)\rho(t)$. Then $D_{s,\lambda}$ and $D_{s,\rho}$ do not depend upon the choice of $t$, and are regular subgroups of $\perm(G)$ that are normalized by $\lambda(G)$ and isomorphic to $D_4$. The $ 3 $ choices of $ s $ yield $ 6 $ distinct groups, and these are the underlying groups of the Hopf Galois structures of type $D_4$ on $ L/F $.
\end{lemma}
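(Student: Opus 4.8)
The plan is to follow the template established in \Cref{lemma_Est}, \Cref{lemma_Ast}, and \Cref{lemma_Cst}: first pin down the abstract isomorphism type together with the (in)dependence on $t$, then verify regularity and the normalisation condition, and finally separate the six groups by exhibiting for each one an element that lies in it but in none of the others, invoking \Cref{table_count} to conclude.

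For the isomorphism type of $D_{s,\lambda}$ I would first record that $\lambda(s)$ has order $4$, and that since $s^2=t^2=z$ is the central involution of $G$ (and $\lambda(z)=\rho(z)$), the element $\lambda(t)\rho(s)$ squares to $\lambda(z)\rho(z)=1$ while being non-trivial (it sends $1_G$ to $ts^{-1}\neq 1_G$). Using that $\lambda(G)$ and $\rho(G)$ commute inside $\perm(G)$, a one-line conjugation gives $(\lambda(t)\rho(s))\lambda(s)(\lambda(t)\rho(s))^{-1}=\lambda(tst^{-1})$, and the key group-theoretic input is that for distinct $s,t\in\{\sigma,\tau,\sigma\tau\}$ one has $tst^{-1}=s^{-1}$ in $G$; hence this conjugate equals $\lambda(s)^{-1}$, and $D_{s,\lambda}$ is a quotient of $D_4$. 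To see it has order exactly $8$, I note $\langle\lambda(s)\rangle\subseteq\lambda(G)$ whereas $\lambda(t)\rho(s)\notin\lambda(G)$ (an equality $\lambda(t)\rho(s)=\lambda(g)$ would force $\rho(s)\in\lambda(G)$, which holds only when $s$ is central), so $D_{s,\lambda}$ properly contains $\langle\lambda(s)\rangle$ and therefore $D_{s,\lambda}\cong D_4$. Independence of $t$ comes out of the same bookkeeping: the two admissible values $t_1,t_2$ of $t$ lie in the same (nontrivial) coset of $\langle s\rangle$, so $t_2=t_1 s^{k}$ for some $k$, whence $\lambda(t_2)\rho(s)=(\lambda(t_1)\rho(s))\lambda(s)^{k}\in\langle\lambda(s),\lambda(t_1)\rho(s)\rangle$. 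The arguments for $D_{s,\rho}$ are identical with the roles of $\lambda$ and $\rho$ interchanged.

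For regularity it suffices, given $|D_{s,\lambda}|=8=|G|$, to check transitivity on $G$: the orbit of $1_G$ under $\langle\lambda(s)\rangle$ is $\langle s\rangle$, and applying an element of the nontrivial coset, $\lambda(ts^{j})\rho(s)$, to $1_G$ sweeps out $t\langle s\rangle$, so the orbit is all of $G$. For normalisation by $\lambda(G)$ it is enough to conjugate the two generators by $\lambda(\sigma)$ and $\lambda(\tau)$: conjugating $\lambda(s)$ yields $\lambda(gsg^{-1})$ with $gsg^{-1}\in\{s,s^{-1}\}$ (the conjugacy class of $s$ in $Q_8$), both powers of $\lambda(s)$; conjugating $\lambda(t)\rho(s)$ yields $\lambda(gtg^{-1})\rho(s)$ with $gtg^{-1}\in\{t,t^{-1}\}$, and in the second case $\lambda(t^{-1})\rho(s)=\lambda(t)\lambda(z)\rho(s)=(\lambda(t)\rho(s))\lambda(s)^{2}$, since $t^{-1}=tz$ and $\lambda(z)=\lambda(s)^{2}$, so the conjugate again lies in $D_{s,\lambda}$; and $D_{s,\rho}$ is handled symmetrically. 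Finally, to see the six groups are distinct I would observe that $\lambda(s)\in D_{s,\lambda}$ but $\lambda(s)$ lies in no $D_{s',\lambda}$ with $s'\neq s$ (that would need $\lambda(s)\in\langle\lambda(s')\rangle$, i.e.\ $s\in\langle s'\rangle$, which fails in $\{\sigma,\tau,\sigma\tau\}$) and in no $D_{s',\rho}$ (the $\rho$-terms cannot absorb $\lambda(s)$ unless $s$ is central); symmetrically $\rho(s)$ singles out $D_{s,\rho}$. Thus $D_{s,\lambda}$ is the unique one of the six containing $\lambda(s)$ and $D_{s,\rho}$ the unique one containing $\rho(s)$, so all six are pairwise distinct; since \Cref{table_count} records exactly $6$ Hopf-Galois structures of type $D_4$ and the Greither–Pareigis correspondence sends distinct regular subgroups normalised by $\lambda(G)$ to distinct structures, these six groups are precisely the underlying groups of the $D_4$-type structures. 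The step I expect to require the most care is the normalisation check, and in particular the $gtg^{-1}=t^{-1}$ case handled via $\lambda(t^{-1})\rho(s)=(\lambda(t)\rho(s))\lambda(s)^{2}$; everything else is routine order and coset bookkeeping in $Q_8$.
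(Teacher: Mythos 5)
Your proof is correct and takes essentially the same route as the paper's: establish the $D_4$ structure, check regularity via transitivity on $1_G$, verify normalisation by conjugating the generators using the commutativity of $\lambda(G)$ and $\rho(G)$, and separate the six groups by observing that $D_{s,\lambda}$ is the unique one containing $\lambda(s)$ and $D_{s,\rho}$ the unique one containing $\rho(s)$, before invoking \Cref{table_count}. The only cosmetic difference is that the paper simply lists the eight elements of $D_{s,\lambda}$ and reads off the order, the independence of $t$, and the anticommutation relation directly, whereas you derive these from the dihedral presentation and coset bookkeeping.
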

\begin{proof} 
For a fixed choice of $ t $ the elements of $D_{s,\lambda}$ are
\begin{equation*}
1,\,\lambda(s),\,\lambda(s^2),\,\lambda(s^{-1}),\,\lambda(t)\rho(s),\,\lambda(st)\rho(s),\,\lambda(t^{-1})\rho(s), \,\lambda((st)^{-1})\rho(s).
\end{equation*}
We see immediately that using $ st $ in place of $ t $ yields the same group, that $ \lambda(s) $ has order $ 4 $, $ \lambda(t)\rho(s) $ has order $ 2 $, and that these elements anticommute. Therefore $ D_{s,\lambda} \cong D_{4} $. It is clear that $ D_{s,\lambda} \subset \perm(G) $ and that $D_{s,\lambda}\cdot1_{G}=G$; hence  $D_{s,\lambda}$ is a regular subgroup of $\perm(G)$. The verification that it is normalized by $ \lambda(G) $ is very similar to the verifications in \Cref{lemma_Est} and \Cref{lemma_Ast}, using the fact that $\rho(G)$ and $\lambda(G)$ commute inside $ \perm(G) $. Similarly, $ D_{s,\rho} $ is a regular subgroup of $ \perm(G) $ that is isomorphic to $ D_{4} $ and normalized by $ \lambda(G) $. To show that the $ 3 $ choices of $ s $ yield $ 6 $ distinct groups, note that for each $ s $ the group $ D_{s,\lambda} $ is the only one that contains $ \lambda(s) $ and that $ D_{s,\rho} $ is the only one that contains $ \rho(s) $. Hence, by \Cref{table_count}, the groups  $D_{s,\lambda}$ and $D_{s,\rho}$ are the underlying groups of the 6 Hopf-Galois structures of type $D_4$.
\end{proof}

\begin{remark}
For every regular subgroup $ N $ of $ \perm(G) $ corresponding to a Hopf-Galois structure on $ L/F $ we have $ \rho(\sigma^{2}) \in N $, and so $ Z(\rho(G)) \subseteq \rho(G) \cap N $. Clearly this is the case for $ N=\rho(G) $ and $ N=\lambda(G) $, and it is easy to verify that it holds for $ N = E_{s,t},  A_{s,t}, D_{s,\lambda}, $ and $ D_{s,\rho} $ (for all valid choices of $ s,t $) from the definitions of these groups. Finally, we can verify that it holds for the groups $ C_{s,t} $ (for all valid choices of $ s,t $) by computing $ \eta_{s,t}^{4} = \rho(\sigma^{2}) $ in these cases. 
\end{remark}

\section{Hopf algebra isomorphisms}

In this section we determine which of the Hopf algebras giving Hopf-Galois structures on $ L/F $ are isomorphic as $F$-Hopf algebras. In \cite[Theorem 2.2]{TARP2017} Koch, Kohl, Underwood and the second named author outline the following criterion for two Hopf algebras arising from the Greither-Pareigis correspondence to be isomorphic as Hopf algebras: let $N_1$ and $N_2$ be underlying groups of two Hopf Galois structures on $L/F$. Then $L[N_1]^G\cong L[N_2]^G$ as $F$-Hopf algebras if and only if there exists a $G$-equivariant isomorphism $f:N_1\xrightarrow{\sim}N_2$. In particular, no two Hopf algebras of different types may be isomorphic as $F$-Hopf algebras. 

%Suppose, for some $s\in\{\sigma,\tau,\sigma\tau\}$, that $\langle s\rangle=\stab_G(\eta)$ for some $\eta\in N_1$. Due to $G$-equivariance we require an isomorphism $f$ such that $s\cdot f(\eta)=f(s\cdot \eta)=f(\eta)$. Further, for some $t\neq s$, we must have $t\cdot f(\eta)=f(t\cdot \eta)\neq f(\eta)$.

We now determine which of our Hopf algebras are isomorphic. We consider the isomorphism classes of the underlying groups individually. We start with the elementary abelian groups.

\begin{lemma} \label{3.1}
The Hopf algebras giving the two Hopf-Galois structures of type $C_2\times C_2\times C_2$ are isomorphic to each other as Hopf algebras. That is, $L[E_{\sigma,\tau}]^G\cong L[E_{\tau,\sigma}]^G$ as Hopf algebras.  
\end{lemma}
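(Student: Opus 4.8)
Proof plan: by the criterion of \cite[Theorem 2.2]{TARP2017} quoted above, it suffices to exhibit a $G$-equivariant group isomorphism $f : E_{\sigma,\tau} \xrightarrow{\sim} E_{\tau,\sigma}$. Here $G$ acts on each $E_{s,t}$ by conjugation through $\lambda(G)$, and since both groups are normalized by $\lambda(G)$ (by \Cref{lemma_Est}) this action is well-defined. Because both groups are elementary abelian of order $8$, any bijection between chosen generating sets that respects the relations extends to a group isomorphism automatically; the only real content is the $G$-equivariance.

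First I would record the conjugation action of $G$ on the generators of each group. From the computations in the proof of \Cref{lemma_Est}, conjugation by $\lambda(\sigma)$ and $\lambda(\tau)$ sends each generator $\lambda(a)\rho(b)$ to $\lambda(gag^{-1})\rho(b)$, so the $\lambda(G)$-orbits within $E_{s,t}$ are governed entirely by the conjugacy classes of the $\lambda$-components. In $Q_8$ the conjugacy classes are $\{1\}$, $\{\sigma^2\}$, $\{\sigma,\sigma^{-1}\}$, $\{\tau,\tau^{-1}\}$, $\{\sigma\tau,(\sigma\tau)^{-1}\}$; so for $E_{\sigma,\tau}$ the three nontrivial orbits are $\{\lambda(\sigma^2)\}$, $\{\lambda(\sigma)\rho(\tau),\lambda(\sigma^{-1})\rho(\tau)\}$, $\{\lambda(\tau)\rho(\sigma\tau),\lambda(\tau^{-1})\rho(\sigma\tau)\}$, and $\{\lambda(\sigma\tau)\rho(\sigma),\lambda((\sigma\tau)^{-1})\rho(\sigma)\}$ — plus the central element $\lambda(\sigma^2)$ fixed. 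The same description holds for $E_{\tau,\sigma}$ with the roles of $\sigma$ and $\tau$ swapped.

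Next I would define $f$ on generators by matching these orbits: send $\lambda(\sigma^2) \mapsto \lambda(\tau^2) = \lambda(\sigma^2)$, send the generator $\lambda(\sigma)\rho(\tau)$ of $E_{\sigma,\tau}$ to the generator $\lambda(\tau)\rho(\sigma)$ of $E_{\tau,\sigma}$, and send $\lambda(\tau)\rho(\sigma\tau)$ to $\lambda(\sigma)\rho(\tau\sigma) = \lambda(\sigma)\rho((\sigma\tau)^{-1})$ (or whichever element of the corresponding orbit makes the relations work — since the group is a vector space over $\mathbb{F}_2$, I just need the three images to be independent). Since $\{\lambda(\sigma^2), \lambda(\sigma)\rho(\tau), \lambda(\tau)\rho(\sigma\tau)\}$ is a basis of $E_{\sigma,\tau}$ as an $\mathbb{F}_2$-vector space and the chosen images form a basis of $E_{\tau,\sigma}$, this extends uniquely to a group isomorphism. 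I would then check $G$-equivariance generator-by-generator: for instance $f({}^{\tau}(\lambda(\sigma)\rho(\tau))) = f(\lambda(\sigma^{-1})\rho(\tau))$, and this must equal ${}^{\tau}(f(\lambda(\sigma)\rho(\tau))) = {}^{\tau}(\lambda(\tau)\rho(\sigma)) = \lambda(\tau)\rho(\sigma)$; since $\lambda(\sigma^{-1})\rho(\tau) = \lambda(\sigma)\rho(\tau)\cdot\lambda(\sigma^2)$ in $E_{\sigma,\tau}$ and $f$ is linear, $f(\lambda(\sigma^{-1})\rho(\tau)) = \lambda(\tau)\rho(\sigma)\cdot\lambda(\sigma^2)$, so I would need to verify this equals $\lambda(\tau)\rho(\sigma)$ inside $E_{\tau,\sigma}$ — which may force a more careful initial choice of the images, or a twist by the central element.

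The main obstacle is precisely this bookkeeping: I must choose $f$ so that it is simultaneously a group homomorphism \emph{and} intertwines the two conjugation actions, and because the central element $\lambda(\sigma^2)$ appears when reducing $\lambda(\sigma^{-1})\rho(\tau)$ to the basis, a naive generator assignment may fail equivariance by a factor of $\lambda(\sigma^2)$. The fix is to track the action on the full list of eight elements (as enumerated in \Cref{lemma_Est}) rather than just the generators, identify the orbit structure exactly, and then define $f$ as an orbit-preserving bijection on the whole set, checking afterwards that it is additive. Once a consistent $f$ is found, $G$-equivariance is a finite check over the two generators $\sigma,\tau$ of $G$ acting on the three generators of $E_{\sigma,\tau}$, and the conclusion $L[E_{\sigma,\tau}]^G \cong L[E_{\tau,\sigma}]^G$ follows immediately from \cite[Theorem 2.2]{TARP2017}.
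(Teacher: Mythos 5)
Your overall framework is the same as the paper's: invoke the criterion of \cite[Theorem 2.2]{TARP2017} and exhibit a $G$-equivariant isomorphism $f:E_{\sigma,\tau}\to E_{\tau,\sigma}$. However, the concrete map you propose is not $G$-equivariant, and the repair you anticipate (a twist by the central element) cannot fix it. A $G$-equivariant bijection must preserve stabilizers. Conjugation acts only on the $\lambda$-components, so $\lambda(\sigma)\rho(\tau)\in E_{\sigma,\tau}$ has stabilizer $C_G(\sigma)=\langle\sigma\rangle$, whereas your proposed image $\lambda(\tau)\rho(\sigma)$ has stabilizer $\langle\tau\rangle$. Concretely, ${}^{\sigma}\bigl(\lambda(\sigma)\rho(\tau)\bigr)=\lambda(\sigma)\rho(\tau)$, but ${}^{\sigma}\bigl(\lambda(\tau)\rho(\sigma)\bigr)=\lambda(\sigma\tau\sigma^{-1})\rho(\sigma)=\lambda(\tau^{-1})\rho(\sigma)\neq\lambda(\tau)\rho(\sigma)$. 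Since \emph{neither} element of the orbit $\{\lambda(\tau)\rho(\sigma),\lambda(\tau^{-1})\rho(\sigma)\}$ is fixed by $\sigma$ (they are swapped), no choice within that orbit --- in particular no multiplication by $\lambda(\sigma^2)$, which merely exchanges the two --- can serve as the image of a $\sigma$-fixed element. Your ``swap $\sigma\leftrightarrow\tau$'' matching of orbits is therefore irreparably wrong, not merely off by bookkeeping.

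The stabilizers $\langle\sigma\rangle$, $\langle\tau\rangle$, $\langle\sigma\tau\rangle$ of the three nontrivial orbits force the correct matching: the $\lambda$-components must be preserved and only the $\rho$-components permuted. This is exactly what the paper's map does: $\lambda(\sigma)\rho(\tau)\mapsto\lambda(\sigma)\rho((\sigma\tau)^{-1})=\lambda(\sigma)\rho(\tau\sigma)$ (a generator of $E_{\tau,\sigma}$ in the same $\langle\sigma\rangle$-stabilized orbit), $\lambda(\sigma^{2})\mapsto\lambda(\sigma^{2})$, and $\lambda(\tau)\rho(\sigma\tau)\mapsto\lambda(\tau)\rho(\sigma)$. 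Equivariance then follows because the two elements of each nontrivial orbit differ by multiplication by the fixed central element $\lambda(\sigma^{2})$, which is compatible with $f$ being a homomorphism; this is the check you set up at the end but did not complete. Your closing suggestion to ``define $f$ as an orbit-preserving bijection on the whole set and check additivity'' points in the right direction, but exhibiting such a map is the entire content of the lemma, so leaving it unconstructed --- with an inequivariant candidate on the table and a misdiagnosis of why it fails --- is a genuine gap.
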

\begin{proof}
Recall the definition of $ E_{s,t} $ from \Cref{lemma_Est}. The non-trivial $G$-orbits of $E_{s,t}$ are $\{\lambda(s)\rho(t),\lambda(s^{-1})\rho(t)\}$, $\{\lambda(t)\rho(st),\lambda(t^{-1})\rho(st)\}$ and $\{\lambda(st)\rho(s),\lambda((st)^{-1})\rho(s)\}$ with stabilisers $\langle s\rangle$, $\langle t \rangle$ and $\langle st \rangle$ respectively. The map $ f: E_{s,t}\rightarrow E_{t,s} $ defined by 
\begin{align*}
f: &\left\{\begin{array}{rcl}
\lambda(s)\rho(t) & \mapsto & \lambda(s)\rho((st)^{-1})\\
\lambda(s^2) & \mapsto & \lambda(s^2)\\
\lambda(t)\rho(st) & \mapsto & \lambda(t)\rho(s).
\end{array}\right.
\end{align*}
is a $ G $-equivariant isomorphism.
\end{proof}

Now we find that for the Hopf-Galois structures of type $C_4\times C_2$ the Hopf algebra isomorphism classes are determined by the choice of $s$.

\begin{lemma}\label{3.2}
Let $s, s^{\prime},t,t^{\prime}\in\{\sigma,\tau,\sigma\tau\}$ with $ s \neq t $ and $ s^{\prime} \neq t^{\prime} $. We have $L[A_{s,t}]^G\cong L[A_{s^{\prime},t^{\prime}}]^G$ if and only if $ s=s^{\prime} $. 
\end{lemma}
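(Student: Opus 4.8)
The plan is to apply the isomorphism criterion from \cite[Theorem 2.2]{TARP2017}: the Hopf algebras $L[A_{s,t}]^G$ and $L[A_{s',t'}]^G$ are isomorphic as $F$-Hopf algebras if and only if there is a $G$-equivariant group isomorphism $A_{s,t} \xrightarrow{\sim} A_{s',t'}$. So the whole statement reduces to deciding, for each pair of pairs, whether such an equivariant isomorphism exists. The structural data needed is the $G$-action on each $A_{s,t}$; since $\lambda(G)$ and $\rho(G)$ commute inside $\perm(G)$, the generator $\rho(t)$ is central (fixed by $G$), while $\lambda(s)$ is acted on by conjugation through $\lambda(G)$, exactly as $s$ is acted on inside $G$. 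Concretely, for $s \in \{\sigma,\tau,\sigma\tau\}$ the conjugacy class of $s$ in $G$ is $\{s, s^{-1}\}$, so the non-trivial $G$-orbits in $A_{s,t}$ are $\{\lambda(s)\rho(t)^{\pm 1}\}$-type pairs: I would first list the orbits explicitly, noting that $\lambda(s^2) = \lambda(\sigma^2)$ and $\rho(t^2) = \rho(\sigma^2)$ are both $G$-fixed and in fact equal inside $\perm(G)$, and that the orbit structure depends only on which coset representatives appear, hence ultimately only on $s$ and $t$.

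Next I would prove the ``if'' direction. Fix $s$ and let $t \neq t'$ be the two possible second components. I would write down an explicit map $f: A_{s,t} \to A_{s,t'}$ sending $\lambda(s) \mapsto \lambda(s)$ and $\rho(t) \mapsto \rho(t')\lambda(s^2)^{a}$ (or a similar correction by the common central element) chosen so that $f$ is a group homomorphism — one must check it respects the relations, in particular that the images still have the right orders and share the same square — and so that $f$ intertwines the $G$-actions, which is automatic on $\lambda(s)$ since it is sent to itself and automatic on the $\rho$-part since those elements are $G$-fixed on both sides. The only subtlety is choosing the exponent so that $f$ is bijective; this is a short finite check. This shows $s = s'$ implies $L[A_{s,t}]^G \cong L[A_{s',t'}]^G$.

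For the ``only if'' direction I would argue that a $G$-equivariant isomorphism $f: A_{s,t} \to A_{s',t'}$ must carry $G$-fixed points to $G$-fixed points and must preserve the size of each $G$-orbit. The subgroup of $G$-fixed elements of $A_{s,t}$ is $\langle \rho(t), \lambda(s^2)\rangle \cong C_2 \times C_2$ (a copy of $C_2 \times C_2$ inside the $C_4 \times C_2$), while the four non-fixed elements split into two $G$-orbits of size $2$, namely $\{\lambda(s)\rho(t), \lambda(s^{-1})\rho(t)\}$ and $\{\lambda(s)\rho(t^{-1}), \lambda(s^{-1})\rho(t^{-1})\}$ — equivalently, $\{\lambda(s)\rho(t)^{\pm}\}$ and its coset. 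The key invariant distinguishing different $s$ is the following: among the non-fixed elements, those of order $4$ form the set $\{\lambda(s), \lambda(s^{-1}), \lambda(s)\lambda(s^2), \lambda(s^{-1})\lambda(s^2)\} = \{\lambda(s)^{\pm 1}\rho(\sigma^2)^{\{0,1\}}\}$, and within $A_{s,t}$ these are precisely the elements whose square is the common central element $\lambda(\sigma^2)$ and which, modulo the $C_2 \times C_2$ of fixed points, are non-trivial; crucially the image under any isomorphism of $\langle\lambda(s)\rangle$ must be the unique cyclic subgroup of order $4$, which in $A_{s',t'}$ is $\langle\lambda(s')\rangle$. An equivariant $f$ must therefore restrict to a $G$-equivariant isomorphism $\langle\lambda(s)\rangle \to \langle\lambda(s')\rangle$, forcing $\lambda(s)$ and $\lambda(s')$ (or $\lambda(s)$ and $\lambda(s'^{-1})$) to lie in $G$-orbits of the same ``shape''. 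I would then observe that the three elements $\sigma, \tau, \sigma\tau$ are pairwise non-conjugate in $G$ and that the three subgroups $\langle\sigma\rangle, \langle\tau\rangle, \langle\sigma\tau\rangle$ are pairwise distinct and not related by any automorphism fixing the relevant structure in a $G$-equivariant way — more precisely, a $G$-equivariant isomorphism $\langle\lambda(s)\rangle \to \langle\lambda(s')\rangle$ of these $G$-modules exists only when the $G$-action on $\langle s\rangle$ matches that on $\langle s'\rangle$ under some identification, and since conjugation by $G$ acts on $\langle s\rangle \cong C_4$ by inversion in all three cases, the obstruction is not the module structure of the cyclic piece alone but the way it sits inside $\perm(G)$ relative to $\rho(G)$. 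The cleanest formulation: $\langle \lambda(s)\rangle \cap \rho(G) = \langle\rho(\sigma^2)\rangle$ for each $s$, but $\lambda(s)\rho(t)$ has order $4$ (not $2$) exactly when... — here I would instead pin down the invariant as $\{\,x \in A_{s,t} : x^2 = 1,\ x \text{ is } G\text{-fixed}\,\} \cup \{\text{the two order-4 orbits}\}$ and show the image of $\lambda(s)$ determines $s$. The main obstacle is making this last invariant genuinely rigorous: I need an intrinsic, $G$-equivariant group-theoretic quantity attached to $A_{s,t}$ that recovers $s$, and the natural candidate is the unique cyclic subgroup $\langle\lambda(s)\rangle$ of order $4$ together with its position relative to the canonical central element $\rho(\sigma^2)$ and the $G$-fixed subgroup — I expect to finish the argument by showing that an equivariant $f$ must send $\lambda(s) \mapsto \lambda(s')^{\pm1}$ times a fixed element, and then that compatibility with the $G$-action (conjugation by $\lambda(G)$) on the remaining, non-central generators of $A_{s,t}$ forces $\langle s\rangle = \langle s'\rangle$, i.e. $s = s'$.
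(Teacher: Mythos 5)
Your framework is right (reduce to the existence of a $G$-equivariant isomorphism via \cite[Theorem 2.2]{TARP2017}) and your ``if'' direction is essentially the paper's: the map $\lambda(s)\mapsto\lambda(s)$, $\rho(t)\mapsto\rho(t')$ already works with no central correction needed, since $\rho(t)$ and $\rho(t')$ have the same order, the same square $\rho(\sigma^2)=\lambda(s)^2$, and are both $G$-fixed. The problem is the ``only if'' direction, which you leave unfinished --- you say yourself that the distinguishing invariant still needs to be made rigorous --- and the computations you do write down contain errors. First, since $\lambda(\sigma^2)=\rho(\sigma^2)=\rho(t)^2$ in $\perm(G)$, the $G$-fixed subgroup $\langle\rho(t),\lambda(s^2)\rangle$ is $\langle\rho(t)\rangle\cong C_4$, not $C_2\times C_2$. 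Second, your two claimed non-fixed orbits $\{\lambda(s)^{\pm1}\rho(t)\}$ and $\{\lambda(s)^{\pm1}\rho(t^{-1})\}$ are the same set (because $\lambda(s)\rho(t^{-1})=\lambda(s\sigma^2)\rho(t)=\lambda(s^{-1})\rho(t)$); the actual non-trivial orbits are $\{\lambda(s),\lambda(s^{-1})\}$ and $\{\lambda(s)\rho(t),\lambda(s^{-1})\rho(t)\}$. Third, $C_4\times C_2$ has two cyclic subgroups of order $4$ (here $\langle\lambda(s)\rangle$ and $\langle\rho(t)\rangle$), so there is no ``unique cyclic subgroup of order $4$'' to appeal to.

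The missing idea is simply to use point stabilizers rather than hunting for a subgroup-theoretic invariant. A $G$-equivariant bijection $f$ satisfies $\stab_G(x)=\stab_G(f(x))$ for every $x$. Every non-fixed element of $A_{s,t}$ has stabilizer equal to the centralizer of $s$ in $G$, namely $\langle s\rangle$ (of order $4$), while every non-fixed element of $A_{s',t'}$ has stabilizer $\langle s'\rangle$. Since $\langle\sigma\rangle$, $\langle\tau\rangle$, $\langle\sigma\tau\rangle$ are three distinct subgroups of $Q_8$, an equivariant bijection can exist only if $\langle s\rangle=\langle s'\rangle$, i.e.\ $s=s'$. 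This is a one-line replacement for your entire second half and is exactly what the paper does; as written, your argument does not close.
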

\begin{proof}
Recall the definition of $ A_{s,t} $ from \Cref{lemma_Ast}. The non-trivial $G$-orbits of $A_{s,t}$ (that is, those of length greater than one) are $\{\lambda(s),\lambda(s^{-1})\}$, $\{\lambda(s)\rho(t),\lambda(s^{-1})\rho(t)\}$ both with stabiliser $\langle s \rangle$. Therefore if $ s \neq s^{\prime} $ then there cannot be a $ G $-equivariant isomorphism between $ A_{s,t} $ and $ A_{s^{\prime},t^{\prime}} $ for any choices of $ t,t^{\prime} $. For fixed $ s $ and $ t,t^{\prime} $ satisfying $ s \neq t $ and $ s \neq t^{\prime} $ the map $ f: A_{s,t}\rightarrow A_{s,t^{\prime}} $ defined by 
\begin{align*}
f:&\left\{\begin{array}{rcl}
\lambda(s) & \mapsto & \lambda(s)\\
\rho(t) & \mapsto & \rho(t^{\prime}).
\end{array}\right.
\end{align*}
is a $G$-equivariant isomorphism:
\end{proof}

With a nearly identical argument we now give the result for Hopf-Galois structures of type $C_8$.

\begin{lemma}\label{3.3}
Let $s, s^{\prime},t,t^{\prime}\in\{\sigma,\tau,\sigma\tau\}$ with $ s \neq t $ and $ s^{\prime} \neq t^{\prime} $.  We have $L[C_{s,t}]^G\cong L[C_{s^{\prime},t^{\prime}}]^G$ as Hopf algebras if and only if $ t=t^{\prime} $. 
\end{lemma}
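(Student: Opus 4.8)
The plan is to follow exactly the template established in \Cref{3.2}, exploiting the Hopf-algebra isomorphism criterion of \cite[Theorem 2.2]{TARP2017}: $L[C_{s,t}]^G\cong L[C_{s',t'}]^G$ as Hopf algebras if and only if there is a $G$-equivariant isomorphism $C_{s,t}\xrightarrow{\sim}C_{s',t'}$. Since both groups are cyclic of order $8$ generated by $\eta_{s,t}$ and $\eta_{s',t'}$ respectively, any group isomorphism sends $\eta_{s,t}$ to one of the four generators $\eta_{s',t'}^{k}$ with $k\in\{1,3,5,7\}$ of the target, so the whole question reduces to whether one such assignment is $G$-equivariant.

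First I would record the $G$-action on $C_{s,t}$. From the proof of \Cref{lemma_Cst} we already know $\lambda(s)\eta_{s,t}\lambda(s^{-1})=\eta_{s,t}^{3}$ and $\lambda(t)\eta_{s,t}\lambda(t^{-1})=\eta_{s,t}$; since $\{s,t\}$ generates $G$ this determines the action of every $g\in G$ on the generator, hence on all of $C_{s,t}$. In particular the element $\eta_{s,t}^{4}=\rho(\sigma^2)$ is central in $\perm(G)$ and so is fixed by $G$, while $\eta_{s,t}^{2}$ is sent to $\eta_{s,t}^{6}$ by $\lambda(s)$ and fixed by $\lambda(t)$. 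The upshot is a complete description of the $G$-orbits and point-stabilisers in $C_{s,t}$, entirely analogous to the orbit data used in \Cref{3.2} and \Cref{3.3}'s siblings: the generators $\eta_{s,t}^{\pm1},\eta_{s,t}^{\pm3}$ have stabiliser $\langle t\rangle$, the elements $\eta_{s,t}^{\pm2}$ have stabiliser $\langle t\rangle$ as well, and $\eta_{s,t}^{0},\eta_{s,t}^{4}$ are fixed. This is where the asymmetry between $s$ and $t$ enters: the stabiliser of a generator "remembers" $t$ but not $s$.

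For the forward direction (necessity of $t=t'$), I would argue that a $G$-equivariant isomorphism must carry the stabiliser of $\eta_{s,t}$ onto the stabiliser of its image $\eta_{s',t'}^{k}$; since every generator of $C_{s',t'}$ has stabiliser $\langle t'\rangle$, equivariance forces $\langle t\rangle=\langle t'\rangle$, i.e. $t=t'$ (the three elements $\sigma,\tau,\sigma\tau$ generate distinct subgroups of order $4$). For the converse, fix $t=t'$ and choose $s,s'\in\{\sigma,\tau,\sigma\tau\}\setminus\{t\}$; I would exhibit the explicit map $f\colon C_{s,t}\to C_{s,t}$ — more precisely $f\colon C_{s,t}\to C_{s',t}$ — sending $\eta_{s,t}\mapsto\eta_{s',t}$ (or to whichever of $\eta_{s',t}^{\pm1},\eta_{s',t}^{\pm3}$ turns out to intertwine the two $\lambda(s)$- and $\lambda(t)$-actions) and check $G$-equivariance on the two generators $\lambda(s),\lambda(t)$ of $G$ using the action formulas above. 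The main obstacle, and the one genuinely computational point, is verifying that the naive assignment $\eta_{s,t}\mapsto\eta_{s',t}$ really is $G$-equivariant: one must confirm that conjugation by $\lambda(s)$ acts as cubing on $\eta_{s',t}$ just as it does on $\eta_{s,t}$ (so that the relation $\lambda(s)\eta_{s,t}\lambda(s^{-1})=\eta_{s,t}^3$ is preserved), and if instead it acts by a different power one adjusts $f$ to a suitable generator. Since there are only three values of $s$ this is a short finite check in $\perm(G)$, of the same character as the cycle-notation computation already carried out in the proof of \Cref{lemma_Cst}, so I expect no conceptual difficulty beyond bookkeeping.
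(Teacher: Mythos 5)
Your proposal is correct and takes essentially the same route as the paper: it rules out $t\neq t^{\prime}$ by noting that every generator of $C_{s,t}$ has stabiliser $\langle t\rangle$, and for $t=t^{\prime}$ it uses the map $\eta_{s,t}\mapsto\eta_{s^{\prime},t}$, whose $G$-equivariance follows because both actions factor through $G/\langle t\rangle$ with the nontrivial coset acting by cubing. One minor slip: $\rho(\sigma^{2})$ is not central in $\perm(G)$ --- what you mean (and what you use) is that it commutes with $\lambda(G)$ and hence is fixed by the $G$-action.
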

\begin{proof}
Recall the definition of $ C_{s,t} $ from \Cref{lemma_Cst}. The nontrivial $G$-orbits of $C_{s,t}$ are $\{\eta_{s,t},\eta_{s,t}^{3}\}$, $\{\eta_{s,t}^{2},\eta_{s,t}^{6}\}$ and $\{\eta_{s,t}^{5},\eta_{s,t}^{7}\}$, all with stabiliser $\langle t \rangle$. Therefore if $ t \neq t^{\prime} $ then there cannot be a $ G $-equivariant isomorphism between $ C_{s,t} $ and $ C_{s^{\prime},t^{\prime}} $ for any choices of $ s,s^{\prime} $. For fixed $ t $ and $ s,s^{\prime} $ satisfying $ s \neq t $ and $ s^{\prime} \neq t $ let $\eta_{s,t}$ and $\eta_{s^{\prime},t}$ be generators of $C_{s,t}$ and $C_{s^{\prime},t}$ respectively; then the map $ f: C_{s,t}\rightarrow C_{s^{\prime},t} $ defined by 
\begin{align*}
f:\,&\eta_{s,t}\mapsto\eta_{s^{\prime},t}.
\end{align*}
is a $G$-equivariant isomorphism. 
\end{proof}

The result for the Hopf-Galois structures of type $ Q_{8} $ is an instance of a well known result (see \cite[Example 2.4]{TARP2017}, for example). 

\begin{lemma}\label{3.4}
The Hopf algebras $L[\lambda(G)]^G$ and $L[\rho(G)]^G$ are not isomorphic as Hopf algebras.
\end{lemma}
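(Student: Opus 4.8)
The plan is to apply the criterion recalled at the start of Section 3: $L[\lambda(G)]^G \cong L[\rho(G)]^G$ as Hopf algebras if and only if there is a $G$-equivariant isomorphism $f : \rho(G) \xrightarrow{\sim} \lambda(G)$, where $G$ acts on each group by conjugation via $\lambda(G)$ inside $\perm(G)$. So it suffices to show no such $f$ exists. The key observation is that the two actions are structurally very different: as noted in the first example of the paper, $\lambda(G)$ and $\rho(G)$ commute, so the conjugation action of $G$ on $\rho(G)$ is \emph{trivial}, whence every $G$-orbit in $\rho(G)$ is a singleton. By contrast, as noted in the second example, the conjugation action of $G$ on $\lambda(G)$ has orbits equal to the conjugacy classes of $G$; since $G \cong Q_8$ is non-abelian, it has conjugacy classes of size greater than one (indeed $\{\sigma,\sigma^{-1}\}$, $\{\tau,\tau^{-1}\}$, $\{\sigma\tau,(\sigma\tau)^{-1}\}$ each have size $2$).

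The argument then runs as follows. First I would recall that a $G$-equivariant bijection must carry $G$-orbits onto $G$-orbits of the same cardinality. Then I would record the orbit data on each side: $\rho(G)$ has eight orbits, all of size $1$; $\lambda(G)$ has orbits $\{1\}$, $\{\lambda(\sigma^2)\}$, $\{\lambda(\sigma),\lambda(\sigma^{-1})\}$, $\{\lambda(\tau),\lambda(\tau^{-1})\}$, $\{\lambda(\sigma\tau),\lambda((\sigma\tau)^{-1})\}$, so in particular $\lambda(G)$ has orbits of size $2$. Since the multisets of orbit sizes differ, no $G$-equivariant bijection $\rho(G) \to \lambda(G)$ can exist, a fortiori no $G$-equivariant isomorphism; hence the two Hopf algebras are not isomorphic as $F$-Hopf algebras. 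One can phrase this even more cheaply by noting that $L[\rho(G)]^G = F[G]$ is commutative whereas, were the Hopf algebras isomorphic, $L[\lambda(G)]^G$ would be commutative too, forcing $\lambda(G)$ (equivalently $G$) to be abelian, a contradiction — but the orbit-counting version fits the template of the surrounding lemmas and references \cite[Example 2.4]{TARP2017} directly.

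There is essentially no obstacle here: the statement is a direct corollary of the Greither--Pareigis description together with the isomorphism criterion of \cite[Theorem 2.2]{TARP2017}, and all the needed facts about the two canonical actions were already established in the two examples of Section 1. The only point requiring any care is making explicit that $G$-equivariance of a group isomorphism forces matching orbit structures, which is immediate. I would therefore keep the proof to two or three sentences, citing \cite[Example 2.4]{TARP2017} for the general principle and pointing back to the introductory examples for the orbit computations.
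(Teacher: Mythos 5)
Your argument is correct and is essentially identical to the paper's own proof: both observe that the $G$-action on $\rho(G)$ is trivial while the $G$-orbits in $\lambda(G)$ are the conjugacy classes of the non-abelian group $G$, so no $G$-equivariant isomorphism can exist. The extra remark about commutativity is a valid alternative but not needed.
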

\begin{proof}
The $G$-action on $\rho(G)$ is trivial since $\lambda(G)$ and $\rho(G)$ commute. However, the $G$-action on $\lambda(G)$ is conjugation so that the $G$-orbits are the conjugacy classes. Therefore no $G$-equivariant isomorphism can exist.
\end{proof}

Finally, we can give the result for the Hopf-Galois structures of type $ D_{4} $. 

\begin{lemma} \label{3.5}
The Hopf algebras $L[D_{s,\lambda}]^G$ and $L[D_{s,\rho}]^G$ are pairwise nonisomorphic as Hopf algebras.
\end{lemma}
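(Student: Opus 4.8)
The plan is to apply the criterion of \cite[Theorem 2.2]{TARP2017} and rule out $G$-equivariant isomorphisms between the six groups $D_{s,\lambda}$ and $D_{s,\rho}$ (for $s \in \{\sigma,\tau,\sigma\tau\}$) by comparing invariants of the $G$-action. Since a $G$-equivariant isomorphism $f: N_1 \xrightarrow{\sim} N_2$ must carry each $G$-orbit of $N_1$ bijectively onto a $G$-orbit of $N_2$, preserving orbit lengths and the stabilisers of the elements (as subgroups of $G$), it suffices to tabulate the nontrivial $G$-orbits of each group together with their stabilisers and show that no two of the six groups share the same data.

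First I would compute, for a fixed $s$ and a choice of $t$, the $G$-orbits of $D_{s,\lambda}$. Using the element list from \Cref{lemma_Ds}, the action $\,^{g}\eta = \lambda(g)\eta\lambda(g^{-1})$, and the fact that $\lambda(G)$ and $\rho(G)$ commute, I expect the nontrivial orbits to be $\{\lambda(s),\lambda(s^{-1})\}$ (with stabiliser the centraliser of $\lambda(s)$ in $G$, namely $\langle s \rangle$) together with one or more orbits among the four elements of the form $\lambda(\cdot)\rho(s)$; the action on these permutes the $\lambda$-part by conjugation, and I would identify the resulting orbits and their stabilisers. A symmetric computation handles $D_{s,\rho}$, where now $\rho(s)$ is central for the $G$-action (so $\{\rho(s)\}$ — or rather the relevant reflection-type elements — behave differently), giving a different orbit/stabiliser profile. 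The key structural point is that $D_{s,\lambda}$ contains $\lambda(s)$, a $G$-fixed-under-squaring order-$4$ element whose orbit $\{\lambda(s),\lambda(s^{-1})\}$ has stabiliser $\langle s\rangle$, while $D_{s,\rho}$ contains $\rho(s)$ on which $G$ acts trivially, so $\{\rho(s)\}$ and $\{\rho(s^{-1})\}$ are singleton orbits; this immediately distinguishes $D_{s,\lambda}$ from $D_{s',\rho}$ for all $s,s'$, and distinguishes the $D_{s,\lambda}$ from each other and the $D_{s,\rho}$ from each other by the subgroup $\langle s \rangle$ (resp.\ $\langle s \rangle$) appearing as a stabiliser of the order-$4$ elements.

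After assembling this data I would argue: (i) $L[D_{s,\lambda}]^G \not\cong L[D_{s',\rho}]^G$ for any $s,s'$ because the number of $G$-fixed elements (equivalently, singleton orbits) differs — $\rho(s)$ and $\rho(s^{-1})$ are fixed in $D_{s,\rho}$ but the only fixed elements of $D_{s,\lambda}$ are $1$ and $\lambda(s^2)=\rho(s^2)$; (ii) for $s \neq s'$, $L[D_{s,\lambda}]^G \not\cong L[D_{s',\lambda}]^G$ because the order-$4$ elements of $D_{s,\lambda}$ generate $\langle \lambda(s)\rangle$, whose corresponding stabiliser subgroup $\langle s\rangle$ differs from $\langle s'\rangle$; (iii) likewise $L[D_{s,\rho}]^G \not\cong L[D_{s',\rho}]^G$ for $s \neq s'$ by the same invariant applied to the $\rho$-generator. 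The main obstacle I anticipate is purely bookkeeping: correctly computing the $G$-orbits among the four reflection-type elements $\lambda(\cdot)\rho(s)$ in each case and confirming that the stabiliser data I extract is genuinely an isomorphism invariant of the orbit (it is, since $G$-equivariance forces $\stab_G(f(\eta)) = \stab_G(\eta)$ as subgroups of $G$, not merely up to conjugacy). Once the six profiles are laid out side by side the non-isomorphism is immediate from the criterion, and I would close by noting that this, together with \Cref{3.1}--\Cref{3.4}, completes the classification of the Hopf algebras up to Hopf-algebra isomorphism.

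\begin{proof}
\end{proof}
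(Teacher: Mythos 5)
Your overall strategy is exactly the paper's: apply the criterion of \cite[Theorem 2.2]{TARP2017} and compare the $G$-orbit/stabiliser data of the six groups. Parts (i) and (ii) are sound: $D_{s,\lambda}$ has two $G$-fixed elements ($1$ and $\lambda(s^2)$) while $D_{s,\rho}$ has four ($1,\rho(s),\rho(s^2),\rho(s^{-1})$), which separates the two families; and for $s\neq s'$ the order-$4$ elements $\lambda(s)^{\pm1}$ of $D_{s,\lambda}$ form an orbit with stabiliser $\langle s\rangle$, which no order-$4$ element of $D_{s',\lambda}$ can match (the paper phrases this dually: the element of $D_{s',\lambda}$ with stabiliser $\langle s\rangle$ has order $2$).

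Part (iii), however, rests on a false invariant. You propose to distinguish $D_{s,\rho}$ from $D_{s',\rho}$ ``by the same invariant applied to the $\rho$-generator,'' and earlier you assert that the $D_{s,\rho}$ are distinguished from each other ``by the subgroup $\langle s\rangle$ appearing as a stabiliser of the order-$4$ elements.'' But the order-$4$ elements of $D_{s,\rho}$ are precisely $\rho(s)$ and $\rho(s^{-1})$, and since $\lambda(G)$ and $\rho(G)$ commute these are fixed by all of $G$: their stabiliser is $G$, not $\langle s\rangle$, and this data is identical for all three choices of $s$. The correct distinguishing invariant for this family lives on the order-$2$ elements: the two nontrivial orbits of $D_{s,\rho}$ are $\{\lambda(s^{\pm1})\rho(t)\}$ and $\{\lambda(s^{\pm1})\rho(st)\}$, both with stabiliser $\langle s\rangle$, and it is this $\langle s\rangle$ (preserved exactly, not up to conjugacy, by a $G$-equivariant map, as you correctly note) that rules out an isomorphism $D_{s,\rho}\cong D_{s',\rho}$ for $s\neq s'$. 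With that substitution your argument closes; as written, step (iii) does not go through.
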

\begin{proof}
Recall the definitions of $ D_{s,\lambda} $ and $ D_{s,\rho} $ from \Cref{lemma_Ds}. The non-trivial $G$-orbits of $D_{s,\lambda}$ are 
\begin{equation*}
\{\lambda(s),\lambda(s^{-1})\}, \{\lambda(t)\rho(s),\lambda(t^{-1})\rho(s)\}\text{, and }\{\lambda(st)\rho(s), \lambda((st)^{-1})\rho(s)\},
\end{equation*}
with stabilisers $\langle s\rangle$, $\langle t \rangle$, and $\langle st \rangle$ respectively. If $ s \neq s^{\prime} $ and $ f : D_{s,\lambda} \rightarrow D_{s^{\prime},\lambda} $ is a $ G $-equivariant bijection then by considering stabilisers we see that $ f(\lambda(s)) = \lambda(t^{\prime})\rho(s^{\prime}) $ for some $ t^{\prime} $. But $ \lambda(s) $ has order $ 4 $, whereas $ \lambda(t^{\prime})\rho(s^{\prime}) $ has order $ 2 $. Therefore $ f $ cannot be an isomorphism.
%PORTION OF OLD ARGUMENT HERE
%For a $G$-equivariant bijection between $D_{s,\lambda}$ and $D_{u,\lambda}$ we require an $\eta_s$ fixed by $s$ which must map to an $\eta_u$ not fixed by $u$. Then $\eta_s$ must be in the first orbit, thus of order $4$, whereas $\eta_u$ must not be in the first orbit, thus of order $2$. That is, any $G$-equivariant bijection between two distinct groups $D_{s,\lambda}$ must map an element of order 4 to an element of order 2 and so cannot be an isomorphism.

The non-trivial $G$-orbits of $D_{s,\rho}$ are
\begin{equation*}
\{\lambda(s)\rho(t),\lambda(s^{-1})\rho(t)\}\text{ and }\{\lambda(s)\rho(st),\lambda(s^{-1})\rho(st)\}
\end{equation*}
both with stabiliser $\langle s\rangle$. Therefore if $ s \neq s^{\prime} $ then there cannot be a $ G $-equivariant isomorphism between $ D_{s,\lambda} $ and $ D_{s^{\prime}, \lambda} $.

Finally, there cannot be a $ G $-equivariant isomorphism between $ D_{s,\lambda} $ and $ D_{s^{\prime}, \rho} $ for any $ s,s^{\prime} $, since these groups have different numbers of $ G $-orbits. 
\end{proof}

These results agree with the number of isomorphism classes of Hopf algebras for each type given in Table 1 of \cite{CRE2017}. It may also be worth noting that our results imply that the Hopf-Galois structures of abelian type occur in pairs, with each pair arising from two different actions of a single Hopf algebra, and that, by contrast, each Hopf-Galois structure of nonabelian type arises from the action of a distinct Hopf algebra. 

\section{F-algebra isomorphisms}

In this section we investigate the $ F $-algebra structure of the Hopf algebras giving Hopf-Galois structures on $ L/F $. We assume that the characteristic of $F$ is not $ 2 $: this ensures the Hopf algebras are separable, hence semisimple, so that each has an Artin-Wedderburn decomposition (see section 3C of \cite{CUR1990}). 

We fix some notation. Since $L/F$ is a quaternionic extension it has a unique biquadratic subextension $K/F$ corresponding to the unique order 2 subgroup $\langle\sigma^2\rangle$ of $G$, so that $\gal(K/F)=G/\langle\sigma^2\rangle$. Let $s,t\in\{\sigma,\tau,\sigma\tau\}$ with $ s \neq t $, and let $\alpha,\beta$ be elements of $K$ such that $ \alpha^{2}, \beta^{2} \in F $, $s(\alpha)=\alpha, t(\alpha)=-\alpha, s(\beta)=-\beta$ and $t(\beta)=\beta$; note that $K=F(\alpha,\beta)$. We also fix an algebraic closure $F^{\text{alg}}$ of $ F $, and let $ \Omega = \gal(F^{\text{alg}}/F) $. 

%If $ N $ is the underlying subgroup of a Hopf algebra $ H = L[N]^{G} $ giving a Hopf-Galois structure on $ L/F $ and $ \chi $ is a character of $ N $ then the idempotent in $ L[N] $ corresponding to $\chi$ is given by
%\begin{equation*}
%e_\chi=\frac{1}{|N|}\sum_{\eta\in N}\chi(\eta) \eta.
%\end{equation*}
%Noting that $\lambda(\sigma^2)=\rho(\sigma^2)$ we can see that in every underlying group $N$ found in section 2 we have $\lambda(\sigma^{2}) \in N\cap\lambda(G)$, and so the idempotent
%\begin{equation*}
%\mathfrak{e}=\frac{1}{2}\big(1-\lambda(\sigma^2)\big)
%\end{equation*}
%corresponds to a character of each of the underlying groups.

If $ N $ is abelian then $H= L[N]^{G} $ is a commutative separable $ F $-algebra, and hence, by \cite[\S 6.3]{WATERHOUSE}, corresponds to a finite $ \Omega $-set. Specifically, $ L[N]^{G} $ corresponds to the $ \Omega $-set $ \widehat{N}  = \mbox{Hom}(N,F^{\text{alg}}) $, where $ \Omega $ acts on $ N $ by factoring through $ G $, and on $ \widehat{N} $ by $ \left(^{\omega}\chi \right)[\eta] = \omega(\chi(^{\omega^{-1}} \eta)) $ for all $ \eta \in N $ (in fact, the action of $ \Omega $ on $ \widehat{N} $ factors through $ \gal{(L^{\prime}/K)} $ for some cyclotomic extension $ L^{\prime} $ of $ L $). To make this correspondence explicit, let $\chi_1, \dots, \chi_s\in \widehat{N}$ be a set of representatives for the $\Omega $ orbits of $\widehat{N}$, and for each $i\in\{1,\ldots, s\}$ let $ F_{i} $ be the fixed field of  $\stab_{\Omega}(\chi_i)$; then 
\[ H \cong \prod_{i=1}^{s} F_{i} \mbox{ as $ F $-algebras.} \]
A result of B\"oltje and Bley \cite[Lemma 2.2]{BLE1999} shows how one may construct an $ F $-basis of $ L[N]^G $ corresponding to this decomposition: we have $ L[N]^{G} =  F^{\text{alg}}[N]^{\Omega} $, and the group algebra $ F^{\text{alg}}[N] $ has a basis of mutually orthogonal idempotents, each corresponding to an element of $ \widehat{N} $. The action of $ \Omega $ on $ F^{\text{alg}}[N] $  permutes these idempotents, and by forming $ \Omega $-invariant linear combinations we obtain an $ F $-basis of $ L[N]^{G} $ corresponding to the decomposition above. 

If $ H=L[N]^G $ is a Hopf algebra whose underlying group $ N $ is isomorphic to $C_2\times C_2\times C_2$ then the  values of the characters of $ N $ lie in $ F $, so the action of $ \Omega $ on $ \widehat{N} $ factors through $ G $. Using this observation we have: 

\begin{lemma} 
Let $ E_{s,t} $ be defined as in \Cref{lemma_Est}. Then we have 
\[ L[E_{s,t}]^G\cong F^4\times K  \mbox{ as $ F $-algebras.} \]
\end{lemma}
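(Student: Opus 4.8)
The plan is to apply the general machinery just set up: since $E_{s,t} \cong C_2 \times C_2 \times C_2$ is abelian and all its characters take values in $\{\pm 1\} \subseteq F$, the action of $\Omega$ on $\widehat{E_{s,t}}$ factors through $G$, so I only need to compute the $G$-orbits on $\widehat{E_{s,t}}$ and, for each orbit, identify the fixed field of the stabiliser. By the displayed decomposition result, $L[E_{s,t}]^G \cong \prod_i F_i$ where the product runs over orbit representatives $\chi_i$ and $F_i$ is the fixed field of $\stab_{\Omega}(\chi_i)$, which here equals the fixed field in $K$ of the image of $\stab_G(\chi_i)$ in $G/\langle \sigma^2 \rangle$ (recalling $K = F(\alpha,\beta)$ is the biquadratic subfield).

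First I would fix the presentation of $E_{s,t}$ as an elementary abelian $2$-group on the generators $\lambda(s^2)$, $\lambda(s)\rho(t)$, $\lambda(t)\rho(st)$ listed in Lemma~\ref{lemma_Est}, so that $\widehat{E_{s,t}}$ has $8$ characters determined by their signs on these three generators. Next I would work out the $G$-action: since $G$ acts on $E_{s,t}$ by conjugation via $\lambda(G)$, and we already computed in Lemma~\ref{lemma_Est} that $\lambda(s)$ fixes $\lambda(s)\rho(t)$ while $\lambda(t)$ sends it to $\lambda(s^{-1})\rho(t) = \lambda(s)\rho(t)\cdot\lambda(s^2)$ (similarly for the other generators), the induced action on characters is explicit. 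I expect to find: the trivial character (orbit of size $1$); the character dual to $\lambda(s^2)$, i.e. the one nontrivial only where needed, fixed by all of $G$ (another orbit of size $1$ — this is consistent with $\langle \sigma^2\rangle$ acting trivially and the quotient being biquadratic); and then the remaining $6$ characters splitting into orbits. Counting: two fixed characters give two copies of $F$; I anticipate the other six characters form three orbits of size $2$, each with stabiliser of index $2$ in $G$ hence fixing a quadratic subfield of $K$ — but wait, that would give $F^2 \times (\text{three quadratic fields})$, not $F^4 \times K$. So I must instead find that among the six nontrivial non-"$\sigma^2$-dual" characters, four are $G$-fixed (giving $F^4$ total with the two above) and the remaining two form a single orbit of size $2$ whose stabiliser has index $2$ — no. Let me recompute: the characters fixed by $G$ are exactly those trivial on the commutator-type differences; since $\lambda(t)$ multiplies $\lambda(s)\rho(t)$ by $\lambda(s^2)$, a character $\chi$ is $G$-fixed iff $\chi(\lambda(s^2)) = 1$, giving a subgroup of $4$ fixed characters, hence $F^4$; the remaining $4$ characters have $\chi(\lambda(s^2)) = -1$ and the $G$-action permutes them, and the orbit structure there must be a single orbit of size $4$ with stabiliser $\langle \sigma^2 \rangle$, whose fixed field is exactly $K$. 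That yields $F^4 \times K$.

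The main obstacle I expect is pinning down the orbit of size $4$ precisely and confirming its stabiliser is $\langle \sigma^2 \rangle$: I need to check that $\lambda(s)$, $\lambda(t)$, and $\lambda(st)$ each act nontrivially on the set of four characters with $\chi(\lambda(s^2)) = -1$, and that their product-generated action has a single orbit with trivial stabiliser in $G/\langle\sigma^2\rangle$ — equivalently, that the only element of $G$ fixing such a $\chi$ is in $\langle \sigma^2 \rangle$. This is a finite check using the conjugation formulas from Lemma~\ref{lemma_Est}; once done, the fixed field of $\stab_{\Omega}(\chi)$ is the fixed field of $\langle \sigma^2 \rangle$ acting on (a cyclotomic extension of) $L$, which is just $K$ since all character values are already rational. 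Assembling, $L[E_{s,t}]^G \cong F^4 \times K$, and since the argument never used which of $s,t$ we chose, it applies uniformly to both $E_{\sigma,\tau}$ and $E_{\tau,\sigma}$ — indeed consistent with Lemma~\ref{3.1}, which already told us these two Hopf algebras are isomorphic.
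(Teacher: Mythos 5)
Your proposal is correct and follows essentially the same route as the paper: compute the $G$-orbits on $\widehat{E_{s,t}}$ (all character values lie in $F$, so the $\Omega$-action factors through $G$) and read off the decomposition from the stabilisers, finding four fixed characters and one orbit of size $4$ with stabiliser $\langle\sigma^2\rangle$. Your observation that a character is $G$-fixed iff it is trivial on $\lambda(s^2)$ (because each non-trivial $G$-orbit in $E_{s,t}$ is a pair $\{x,\lambda(s^2)x\}$) is a slightly cleaner way to package the same finite check the paper carries out explicitly.
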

\begin{proof}
 The dual group $\widehat{E}_{s,t}$ is generated by three characters:
\begin{align*}
&\chi_1:\left\{
\begin{array}{rcl}
\lambda(s)\rho(t)&\mapsto& -1\\
\lambda(s^2)&\mapsto& 1\\
\lambda(t)\rho(st)&\mapsto& 1\\
\end{array}\right.,\quad
\chi_2:\left\{
\begin{array}{rcl}
\lambda(s)\rho(t)&\mapsto& 1\\
\lambda(s^2)&\mapsto& -1\\
\lambda(t)\rho(st)&\mapsto& 1\\
\end{array}\right.,\quad
\chi_3:\left\{
\begin{array}{rcl}
\lambda(s)\rho(t)&\mapsto& 1\\
\lambda(s^2)&\mapsto& 1\\
\lambda(t)\rho(st)&\mapsto& -1\\
\end{array}\right..
\end{align*}
Let $ \chi_{0} $ denote the identity in $ \widehat{E}_{s,t} $, and recall the $G$-orbit structure of $E_{s,t}$ in \Cref{3.1}. It is easily verified that $^s\chi_2=\chi_2\chi_3$, $^t\chi_2=\chi_1\chi_2$ and $^{st}\chi_2=\chi_1\chi_2\chi_3$ and that $s$ and $t$ act trivially on $\chi_0$, $\chi_1$, $\chi_3$ and $\chi_1\chi_3$. Hence the orbits of $ G $ in $\widehat{E}_{s,t} $ are 
\[ \{\chi_0\},\quad \{\chi_1\},\quad \{\chi_3\}, \quad \{\chi_1\chi_3\}, \text{ and } \{\chi_2,\chi_1\chi_2,\chi_2\chi_3,\chi_1\chi_2\chi_3\}. \]
The orbit representatives $ \chi_0, \;  \chi_1, \; \chi_3 \; \chi_1\chi_3 $ all have stabilizer $ G $, and the orbit representative $ \chi_2 $ has stabiliser $ \langle s^2\rangle$. Therefore we have $L[E_{s,t}]^G\cong F^4\times K $, as claimed. 
\end{proof}

For the remaining structures whose underlying group $ N $ is abelian there may exist characters of $ N $ whose values do not lie in the field $F$. In these cases the action of $ \Omega $ on $ \widehat{N} $ depends upon the intersection of $ L $ with certain cyclotomic extensions of $ F $, and can be difficult to trace in detail. To overcome this problem we study the action of $ \Omega $ on the group algebra $ F^{\text{alg}}[N] $, as in \cite[Lemma 2.2]{BLE1999}. As discussed above, we have $ L[N]^{G} =  F^{\text{alg}}[N]^{\Omega} $, and the action of $ \Omega $ factors through $ \gal{(L^{\prime}/K)} $ for some cyclotomic extension $ L^{\prime} $ of $ L $. Thus, writing $ G^{\prime} = \gal{(L^{\prime}/L)} $, we have 
\[ L[N]^{G} = \left( L^{\prime}[N]^{G^{\prime}} \right)^{G}, \]
where the action of $ G^{\prime} $ on $ L^{\prime}[N] $ is only on the coefficients. In the following two lemmas we suppress the details of this first step of the descent (if any), and begin with a convenient $ L $-basis on $ L[N] $ on which it is easy to follow the action of $ G $. By forming $ G $-invariant linear combinations of these basis elements we obtain a basis of $ L[N]^{G} $ corresponding to its Artin-Wedderburn decomposition. Although working with bases in this way is rather cumbersome, it has the advantage of applying uniformly, whereas studying the orbits of $ \Omega $ in $ \widehat{N} $ can split into many cases, depending upon the roots of unity contained in $ L $. 

We continue with the Hopf algebras giving the structures of type $C_4\times C_2$.

\begin{lemma} 
Let $ A_{s,t} $ be defined as in \Cref{lemma_Ast}. Then we have
\[ L[A_{s,t}]^G\cong F^4\times F(\alpha,\iota)^d \mbox{ as $ F $-algebras,} \]
where $\iota\in F^{\text{alg}}$ is such that $\iota^2=-1$ and $d=2/[F(\alpha, \iota):F(\alpha)]$.
\end{lemma}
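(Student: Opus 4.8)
The plan is to follow exactly the strategy laid out in the paragraph preceding this lemma: work inside $F^{\mathrm{alg}}[A_{s,t}]$, identify a convenient $L$-basis of $L[A_{s,t}]$ on which the action of $G$ is transparent, and then form $G$-invariant linear combinations to read off the Artin--Wedderburn decomposition. Concretely, recall from \Cref{lemma_Ast} that $A_{s,t}=\langle\lambda(s),\rho(t)\rangle\cong C_4\times C_2$, with $\lambda(s)$ of order $4$, $\rho(t)$ of order $2$, and $\lambda(s)^2=\rho(t)^2=\lambda(s^2)$ wait --- more precisely $\lambda(s)^2$ and $\rho(t)$ generate the $C_2\times C_2$ part while $\lambda(s)$ generates a $C_4$; I would pick generators $a=\lambda(s)$ (order $4$) and $b=\lambda(s)\rho(t)$ or $b=\rho(t)\lambda(s)^{2}$ so that $A_{s,t}=\langle a\rangle\times\langle b\rangle$ with $|a|=4$, $|b|=2$. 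The primitive idempotents of $F^{\mathrm{alg}}[A_{s,t}]$ are indexed by $\widehat{A_{s,t}}$, a character being determined by its value $\zeta\in\{1,\iota,-1,-\iota\}$ on $a$ and its value $\pm1$ on $b$; here $\iota$ is a primitive fourth root of unity. So there are $8$ primitive idempotents $e_{\zeta,\pm}$.

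Next I would compute the action of $G$ on these idempotents. Since $G$ acts on $A_{s,t}$ by conjugation inside $\perm(G)$ and $\rho(G)$ is central, $s$ fixes $\lambda(s)$ and $t$ sends $\lambda(s)\mapsto\lambda(s^{-1})=\lambda(s)^{-1}$ while both fix $\rho(t)$; thus on the group $A_{s,t}$ the element $t$ inverts $a$ and fixes $b$, and $s$ acts trivially. On the idempotents this means $s$ acts only on coefficients (through the cyclotomic part, i.e.\ on $\iota$), $t$ interchanges $e_{\iota,\pm}\leftrightarrow e_{-\iota,\pm}$ while possibly also conjugating $\iota$ on the coefficient side. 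The four characters with $\zeta=\pm1$ are $\Omega$-fixed, contributing four copies of $F$. The remaining four idempotents $e_{\iota,+},e_{-\iota,+},e_{\iota,-},e_{-\iota,-}$ form $\Omega$-orbits: the coefficient action of $\Omega$ on $\iota$ factors through $\gal(F(\iota)/F)$, and $t$ acts geometrically by $\zeta\mapsto\zeta^{-1}$. I would work out that the stabiliser in $\Omega$ of $e_{\iota,+}$ has fixed field exactly $F(\alpha,\iota)$: the coefficient-conjugation by the generator of $\gal(F(\iota)/F)$ (when $\iota\notin F$) already moves $e_{\iota,+}$ to $e_{-\iota,+}$, but this can be compensated by the Galois action of $t$ on $L$ precisely on the subfield fixed by $t$, which is $F(\alpha)$; combining, the fixed field of the stabiliser is $F(\alpha,\iota)$, and the number of copies is $d=2/[F(\alpha,\iota):F(\alpha)]$ — equal to $2$ when $\iota\in F(\alpha)$ and $1$ otherwise. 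Summing gives $L[A_{s,t}]^G\cong F^4\times F(\alpha,\iota)^d$.

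In practice I would phrase the descent in two stages as the surrounding text suggests: first pass from $F^{\mathrm{alg}}[N]$ down to $L'[N]^{G'}$ where $L'=L(\iota)$ and $G'=\gal(L'/L)$, which is trivial if $\iota\in L$ and order $2$ otherwise; over $L'$ the eight primitive idempotents are defined and $G'$ (if nontrivial) conjugates $\iota$, pairing up $e_{\iota,\pm}$ with $e_{-\iota,\pm}$, so after taking $G'$-invariants one is left with an $L'$-algebra that is $(L')^4$ times two copies of $L'$ or, if $\iota\notin L$, four copies of $L$ coming from $\{\zeta=\pm1\}$ plus an $L$-subalgebra isomorphic to $F(\iota)\otimes_F L$-ish pieces. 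Then I descend from $L$ to $F$ using the $G$-action described above (only $t$ acts nontrivially, by inverting $a$), grouping the four "real" idempotents into singleton orbits (four $F$'s) and the remaining idempotents into orbits whose stabilisers have fixed field $F(\alpha,\iota)$.

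The main obstacle is bookkeeping the interaction between the two sources of the $\Omega$-action: the geometric action (conjugation of $A_{s,t}$, which only $t$ does nontrivially, inverting the $C_4$ generator) and the arithmetic/coefficient action (Galois action on the fourth root of unity $\iota$). Exactly how these combine determines whether $e_{\iota,+}$ and $e_{-\iota,+}$ lie in the same $\Omega$-orbit and what the stabiliser's fixed field is; getting the compatibility right — and in particular seeing that the fixed field is $F(\alpha,\iota)$ rather than, say, $F(\alpha)$ or $F(\iota)$ — is the delicate point. Everything else (the four $\Omega$-fixed rational characters, the order computations for $a$ and $b$, verifying that $G$ acts on coefficients only through a cyclotomic quotient) is routine once the generators of $A_{s,t}$ are chosen as above.
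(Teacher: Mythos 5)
Your strategy---computing the $\Omega$-orbits and stabilisers in the dual group $\widehat{A_{s,t}}$ and invoking the correspondence of \cite[\S 6.3]{WATERHOUSE}---is a legitimate route, and it is genuinely different from the paper's: the paper deliberately sidesteps the dual-group computation (citing exactly the case-splitting over roots of unity that you encounter) and instead exhibits an explicit $L$-basis of $L[A_{s,t}]$, forms $G$-invariant linear combinations, and reads off the Wedderburn decomposition from multiplication tables. However, your execution contains a genuine error at precisely the step you yourself identify as the delicate one.

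The error is in the geometric action. You want $A_{s,t}=\langle a\rangle\times\langle b\rangle$ with $|a|=4$, $|b|=2$, with $t$ inverting $a$ and \emph{fixing} $b$, so that $t$ permutes the faithful idempotents by $e_{\iota,\pm}\leftrightarrow e_{-\iota,\pm}$, preserving the sign label. No such $b$ exists: the only order-$2$ elements of $A_{s,t}$ outside $\langle\lambda(s)\rangle$ are $\lambda(s)\rho(t)$ and $\lambda(s^{-1})\rho(t)$, and by \Cref{3.2} these form a single nontrivial $G$-orbit, so every choice of order-$2$ complement generator satisfies $\,^{t}b=a^{2}b$. On a faithful character $(\zeta,\epsilon)$ (with $\zeta=\pm\iota$) the element $t$ therefore also flips the sign label, sending $\epsilon\mapsto\zeta^{2}\epsilon=-\epsilon$. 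This twist is essential: the condition on $b$ forces the geometric part of any stabilising $\omega\in\Omega$ to lie in $\langle s\rangle$, and the condition on $a$ then separately forces $\omega(\iota)=\iota$, giving the fixed field $F(\alpha)\cdot F(\iota)=F(\alpha,\iota)$, with no possibility of the two quadratic conditions cancelling against each other. With your untwisted action the two conditions collapse into a single product condition and the computation gives the wrong answer: already in the generic case $[F(\alpha,\iota):F]=4$ (e.g.\ $F=\Q$, $\alpha=\sqrt{11}$) your orbits have size $2$ and you would obtain $F(\alpha\iota)^{2}$ in place of the single copy of $F(\alpha,\iota)$; and when $F(\iota)=F(\alpha)\neq F$ (which does occur) you would find all eight idempotents $\Omega$-invariant and conclude $F^{8}$. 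There is also a small slip in the same passage: $F(\alpha)$ is the fixed field of $s$ acting on $L$ (indeed $t(\alpha)=-\alpha$), not of $t$. The statement you assert is correct, but the argument as written does not establish it; it is repaired by tracking the $G$-action on the order-$2$ part of $A_{s,t}$ correctly.
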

\begin{proof}
Let
\begin{align*}
b_{0}&=\frac{1}{8}\big(1+\lambda(s)+\lambda(s^2)+\lambda(s^{-1})+\rho(t)^{-1}+\lambda(s)^{-1}\rho(t)+\rho(t)+\lambda(s)\rho(t)\big),\\
b_{1}&=\frac{1}{8}\big(1-\lambda(s)+\lambda(s^2)-\lambda(s^{-1})-\rho(t)^{-1}+\lambda(s)^{-1}\rho(t)-\rho(t)+\lambda(s)\rho(t)\big),\\
b_{2}&=\frac{1}{8}\big(1+\lambda(s)+\lambda(s^2)+\lambda(s^{-1})-\rho(t)^{-1}-\lambda(s)^{-1}\rho(t)-\rho(t)-\lambda(s)\rho(t)\big),\\
b_{3}&=\frac{1}{8}\big(1-\lambda(s)+\lambda(s^2)-\lambda(s^{-1})+\rho(t)^{-1}-\lambda(s)^{-1}\rho(t)+\rho(t)-\lambda(s)\rho(t)\big), \\
b_{4}&=\frac{1}{4}\big(1-\lambda(s^2)+\lambda(s)^{-1}\rho(t)-\lambda(s)\rho(t)\big),\\
b_{5}&=\frac{1}{4}\big(1-\lambda(s^2)-\lambda(s)^{-1}\rho(t)+\lambda(s)\rho(t)\big),\\
b_{6}&=\frac{1}{4}\big(\lambda(s)-\lambda(s^{-1})-\rho(t)^{-1}+\rho(t)\big),\\
b_{7}&=\frac{1}{4}\big(-\lambda(s)+\lambda(s^{-1})-\rho(t)^{-1}+\rho(t)\big).
\end{align*}
It is easily verified that these $ 8 $ elements of $L[A_{s,t}]$ are linearly independent over $L$ and so form an $ L $-basis of $L[A_{s,t}]$. Recall from \Cref{3.2} that the non-trivial $G$-orbits of $A_{s,t}$, are $\{\lambda(s),\lambda(s^{-1})\}$, $\{\lambda(s)\rho(t),\lambda(s^{-1})\rho(t)\}$, both with stabiliser $\langle s \rangle$. From this we see that $ b_{0}, b_{1}, b_{2} $ and $ b_{3} $ are fixed by $ G $, that $ \,^{t}b_{4}=b_{5} $, and that $ \,^{t}b_{6}=b_{7} $. Therefore the following linear combinations of the above elements are all fixed by $G$, and in fact form a basis of $L[A_{s,t}]^G$ over $F$. 
\begin{align*}
a_{0}&=b_{0}, \\
a_{1}&=b_{1}\\
a_{2}&=b_{2},\\
a_{3}&=b_{3}, \\
a_{4,0}&=b_{4}+b_{5}=\frac{1}{2}\big(1-\lambda(s^2)\big)=\mathfrak{e},\\
a_{4,1}&=\alpha(b_{4}-b_{5})=-\alpha\mathfrak{e}\lambda(s)\rho(t),\\
a_{4,2}&=b_{6}+b_{7}=\mathfrak{e}\rho(t),\\
a_{4,3}&=\alpha(b_{6}-b_{7})=\alpha\mathfrak{e}\lambda(s).
\end{align*}

We have $ {a}_i{a}_j = \delta_{i,j} {a}_i $ for $ i,j=0,1,2,3 $ and $a_{4,k}a_i={a}_i{a}_{4,k}=0$ for all $i=0,1,2,3$ and $k=0,1,2,3$. Finally, we consider the multiplication table of the ${a}_{4,k}$.

\begin{center}
\begin{tabular}{c|cccc}
& $a_{4,0}$ & $a_{4,1}$ & $a_{4,2}$ & $a_{4,3}$\\
\hline
$a_{4,0}$ & $a_{4,0}$ & $a_{4,1}$ & $a_{4,2}$ & $a_{4,3}$\\
$a_{4,1}$ & $a_{4,1}$ & $\alpha^2 a_{4,0}$ & $a_{4,3}$ & $\alpha^2 a_{4,2}$\\
$a_{4,2}$ & $a_{4,2}$ & $a_{4,3}$ & $-a_{4,0}$ & $-a_{4,1}$\\
$a_{4,3}$ & $a_{4,3}$ & $\alpha^2 a_{4,2}$ & $-a_{4,1}$ & $-\alpha^2 a_{4,0}$\\
\end{tabular}
\end{center}
From the table it is clear that we have the claimed decomposition.
\end{proof}

We use a similar process for the Hopf algebras giving the Hopf-Galois structures of type $C_8$.

\begin{lemma} 
Let $ C_{s,t} $ be defined as in \Cref{lemma_Cst}. Then we have 
\[ L[C_{s,t}]^G\cong F^2\times F(\beta\iota)^{d_1}\times F(r\iota,\beta\iota)^{d_1d_2} \mbox{ as $ F $-algebras,} \]
where $r,\iota\in F^{\text{alg}} $ such that $r^2=2$, $\iota^2=-1$ and where $d_1=2/[F(\beta\iota):F]$ and $d_2=2/[F(r\iota,\beta\iota):F(\beta\iota)]$.
\end{lemma}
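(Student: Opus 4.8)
The plan is to follow exactly the method used for $L[A_{s,t}]^G$ in the previous lemma: produce an explicit $L$-basis of $L[C_{s,t}]$ adapted to the $G$-action, pass to $G$-invariant (and, where necessary, $\Omega$-invariant) combinations, and read off the Artin--Wedderburn decomposition from the resulting multiplication table. Concretely, write $\eta = \eta_{s,t}$ and note $\eta^4 = \rho(\sigma^2)$ is central and fixed by $G$, with $\lambda(s)\eta\lambda(s^{-1}) = \eta^3$ and $\lambda(t)\eta\lambda(t^{-1}) = \eta$ from \Cref{lemma_Cst}. Since $C_{s,t} \cong C_8$, the group algebra $F^{\mathrm{alg}}[C_{s,t}]$ has eight primitive idempotents $e_k = \frac{1}{8}\sum_{j=0}^{7} \zeta^{-jk} \eta^j$ for $k = 0,\dots,7$, where $\zeta$ is a primitive $8$th root of unity; the element $t$ fixes each $\eta^j$ and hence each $e_k$, while $s$ sends $\eta^j \mapsto \eta^{3j}$ and therefore permutes the idempotents by $e_k \mapsto e_{3k}$. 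This gives $s$-orbits $\{e_0\}$, $\{e_4\}$ (these two fixed), $\{e_1, e_3\}$, $\{e_2, e_6\}$, $\{e_5, e_7\}$, matching the $G$-orbit description $\{\eta,\eta^3\}$, $\{\eta^2,\eta^6\}$, $\{\eta^5,\eta^7\}$ with stabiliser $\langle t\rangle$ recorded in \Cref{3.3}.

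Next I would carry out the descent in two stages as the surrounding text prescribes. The idempotents $e_0, e_4$ are already defined over $F$ (their coefficients are $\pm\frac18$), so they contribute two copies of $F$. For the remaining three $s$-orbit pairs, one first takes the $G'$-invariant combinations over $L' = L(\zeta)$, then the $G$-invariant ones. The cleanest route is to bundle each $\Omega$-orbit: the pair $\{e_2, e_6\}$ involves $\zeta^2 = \iota$, so its fixed subalgebra is governed by $F(\iota)$ and the place of $\beta$ (since $\eta^2$ will act, after the relevant change of variables, like the generator whose eigenfield is $F(\beta\iota)$); the pairs $\{e_1,e_3\}$ and $\{e_5,e_7\}$ involve $\zeta = \frac{r}{2}(1+\iota)$ with $r^2 = 2$, and together form a single block whose fixed algebra is a matrix-type or field component over $F(r\iota, \beta\iota)$. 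Forming explicit real bases — e.g. $e_2 + e_6$, $\beta\iota(e_2 - e_6)$ for the middle block, and analogous combinations pairing $e_1$ with $e_7$ and $e_3$ with $e_5$ for the outer block — one writes down the multiplication table exactly as in the $A_{s,t}$ case and reads off the components. The multiplicities $d_1 = 2/[F(\beta\iota):F]$ and $d_1 d_2 = 2/[F(r\iota,\beta\iota):F]$ appear precisely because, depending on which roots of unity already lie in $L$, an orbit of size two in $\widehat{C_{s,t}}$ may or may not split further under $\Omega$, doubling the number of field components while halving their degree.

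The main obstacle, as the authors themselves flag, is bookkeeping the first descent step from $L'$ down to $L$: the action of $\Omega$ on $\widehat{C_{s,t}}$ genuinely depends on $L \cap F(\zeta_n)$ for the relevant $n$, and one must check that after taking $G'$-invariants the element that behaves like $\zeta^2$ is forced to satisfy the relation of $\beta\iota$ over $F$ (hence the field $F(\beta\iota)$ and not merely $F(\iota)$), and similarly that the primitive $8$th root contributes $F(r\iota,\beta\iota)$. I would handle this by exhibiting a single $L$-basis of $L[C_{s,t}]$ — mirroring the eight elements $b_0,\dots,b_7$ of the previous lemma — chosen so that $G$ permutes it in the pattern above and so that the scalars $\alpha,\beta,r,\iota$ entering the $G$-invariant combinations are exactly those whose minimal polynomials produce the claimed fields; the verification that these elements are $L$-linearly independent and that their $G$-invariant combinations multiply according to the asserted decomposition is then a routine (if lengthy) calculation of the same character as in the $C_4 \times C_2$ case, and I would present it in the same compressed table form rather than in full.
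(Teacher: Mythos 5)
Your plan is essentially the paper's proof: the authors likewise suppress the idempotent/descent bookkeeping and work directly with a rational-coefficient $L$-basis $b_0,\dots,b_7$ of $L[C_{s,t}]$ (your $e_0$, $e_4$, $e_2+e_6$, $-\iota^{-1}(e_2-e_6)$, etc.\ recombined), form the $G$-invariant combinations $b_0,\ b_1,\ b_2,\ \beta b_3,\ b_4,\ \beta b_6,\ b_5+b_7,\ \beta(b_5-b_7)$, and read off the blocks $F^2$, $F[x]/(x^2+b)\cong F(\beta\iota)^{d_1}$ and $F[x,y]/(x^2+b,\,y^2+2)\cong F(r\iota,\beta\iota)^{d_1d_2}$ from the multiplication tables, exactly as you describe. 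The only slip is your parenthetical $d_1d_2=2/[F(r\iota,\beta\iota):F]$, which should be $d_1d_2=4/[F(r\iota,\beta\iota):F]$, since that component of the decomposition is four-dimensional over $F$.
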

\begin{proof}
Let $\eta = \eta_{s,t}$ as defined in \Cref{lemma_Cst}, so that $ C_{s,t} = \langle \eta \rangle $, and let
\begin{align*}
b_{0}&=\frac{1}{8}\big(1+\eta+\eta^2+\eta^3+\eta^4+\eta^5+\eta^6+\eta^7\big),\\
b_{1}&=\frac{1}{8}\big(1-\eta+\eta^2-\eta^3+\eta^4-\eta^5+\eta^6-\eta^7\big), \\
b_{2}&=\frac{1}{4}\big(1-\eta^2+\eta^4-\eta^6\big),\\
b_{3}&=\frac{1}{4}\big(\eta-\eta^3+\eta^5-\eta^7\big),\\
b_{4}&=\frac{1}{2}\big(1-\eta^4\big),\\
b_{5}&=\frac{1}{2}\big(\eta^3-\eta^7\big), \\
b_{6}&=\frac{1}{2}\big(\eta^2-\eta^6\big),\\
b_{7}&=\frac{1}{2}\big(\eta-\eta^5\big).
\end{align*}

It is easily verified that these $ 8 $ elements of $L[C_{s,t}]$ are linearly independent over $ L $ and so form an $ L $-basis of $ L[C_{s,t}] $. Recall from \Cref{3.3} that the nontrivial $G$-orbits of $ C_{s,t} $ are $\{\eta,\eta^{3}\}$, $\{\eta^{2},\eta^{6}\}$ and $\{\eta^{5},\eta^{7}\}$, all with stabiliser $\langle t \rangle$. From this we see that $ b_{0}, b_{1}, b_{2} $ and $ b_{4} $ are fixed by $ G $, that $ \,^{s}b_{3} = -b_{3} $, $ \,^{s}b_{6}=-b_{6} $, and that $ \,^{s}b_{5}=b_{7} $. Therefore the following linear combinations of the above elements are all fixed by $G$, and in fact form a basis of $L[C_{s,t}]$ over $L$:

\begin{align*}
a_{0}&=b_{0},\\
a_{1}&=b_{1},\\
a_{2,0}&=b_{2},\\
a_{2,1}&=\beta b_{3}=\beta b_{2}\eta, \\
a_{3,0}&=b_{4}=\mathfrak{e}, \\
a_{3,1}&=\beta b_{6}=\beta\mathfrak{e}\eta^2,\\
a_{3,2}&=(b_{5}+b_{7}) = \mathfrak{e}(\eta^{3}+\eta), \\
a_{3,3}&=\beta(b_{5}-b_{7}) = \beta \mathfrak{e}(\eta^{3}-\eta). 
\end{align*}

We have $ {a}_i{a}_j = \delta_{i,j} {a}_i $ for $ i,j=0,1 $, ${a}_i{a}_{2,k}=0$ for $i=0,1$ and $k=0,1$, ${a}_i{a}_{3,k}=0$ for $i=0,1$ and $k=0,1,2,3$, and $ a_{2,k}a_{3,l}=0 $ for $ k=0,1 $ and $ l=0,1,2,3 $. Finally, we consider the multiplication tables of the $ a_{2,k} $ and the $a_{3,k}$.

\begin{center}
\begin{tabular}{c|cc}
& $a_{2,0}$ & $a_{2,1}$\\
\hline
$a_{2,0}$ & $a_{2,0}$ & $a_{2,1}$\\
$a_{2,1}$ & $a_{2,1}$ & $-\beta^2 a_{2,0}$\\
\end{tabular}
\end{center}

\begin{center}
\begin{tabular}{c|cccc}
& $a_{3,0}$ & $a_{3,1}$ & $a_{3,2}$ & $a_{3,3}$\\
\hline
$a_{3,0}$ & $a_{3,0}$ & $a_{3,1}$ & $a_{3,2}$ & $a_{3,3}$\\
$a_{3,1}$ & $a_{3,1}$ & $-\beta^{2} a_{3,0}$ & $a_{3,3}$ & $-\beta^{2} a_{3,2}$\\
$a_{3,2}$ & $a_{3,2}$ & $a_{3,3}$ & $-2 a_{3,0}$ & $-2a_{3,1}$\\
$a_{3,3}$ & $a_{3,3}$ & $-\beta^{2} a_{3,2}$ & $-2a_{3,1}$ & $2\beta^2 a_{3,0}$\\
\end{tabular}
\end{center}
From these tables it is clear that we have the claimed decomposition.
\end{proof}

Comparing these results with those obtained in section 3, we see that two Hopf algebras giving Hopf-Galois structures of the same abelian type on $ L/F $ are isomorphic as Hopf algebras if and only if they are isomorphic as $ F $-algebras. On the other hand, although Hopf algebras giving Hopf-Galois structures of different types are not isomorphic as Hopf, in certain situations it is possible that they are isomorphic as $ F $-algebras. For example: if $ \beta = \iota $ then $ L[E_{s,t}]^{G} \cong L[A_{s,t}]^{G} $ as $ F $-algebras. 

The remaining structures are of nonabelian type, and so we cannot employ the methods of \cite[\S 6.3]{WATERHOUSE} or \cite[Lemma 2.2]{BLE1999}. We emulate the same process using the character table in place of the dual group of our underlying group. We write down a convenient $ L $-basis of $L[N]$ and form $ G $-invariant linear combinations of these basis elements. We  find that certain quaternion algebras appear in the decompositions, and so we fix notation for these: for $ x,y \in F^{\times} $, let $ (x,y)_{F} $ denote the quaternion algebra with $ F $-basis $ 1,u,v,w $ satisfying the relations $ u^{2} = x $, $ v^{2} = y $, and $ uv = w = -vu $. In addition, let $ a= \alpha^{2} \in F^{\times} $, $ b=\beta^{2} \in F^{\times} $, where $ \alpha, \beta \in K $ are as defined at the beginning of this section.

We begin with the Hopf algebras giving the classical and canonical non-classical structures of type $Q_8$.

\begin{lemma} \label{lemma_classical_nonclassical_F_alg}
We have
\begin{equation*}
L[\rho(G)]^G\cong K[G] \cong F^4\times (-1,-1)_F \mbox{ as $ F $-algebras}
\end{equation*}
and 
\begin{equation*}
L[\lambda(G)]^G\cong F^4\times (-a,-b)_F  \mbox{ as $ F $-algebras.}
\end{equation*}
\end{lemma}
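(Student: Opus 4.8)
The plan is to treat the two cases separately but by the same method: write down a convenient $L$-basis of $L[N]$ (for $N = \rho(G)$ and $N = \lambda(G)$ respectively), identify the action of $G$ on it, form $G$-invariant linear combinations to get an $F$-basis of $H = L[N]^G$, and read off the Artin-Wedderburn decomposition from the resulting multiplication table. Since $G \cong Q_8$ has four one-dimensional irreducible characters (factoring through $G/\langle\sigma^2\rangle \cong C_2\times C_2$) and one two-dimensional irreducible, the natural $L$-basis to use is the one coming from the central primitive idempotents of $L[G]$ (equivalently of $F^{\mathrm{alg}}[G]$): four rank-one idempotents $e_0,\dots,e_3$ and a $4$-dimensional block spanned by $\mathfrak{e}' \cdot \{1,\lambda(\sigma),\lambda(\tau),\lambda(\sigma\tau)\}$ (or the analogous elements built from $\rho$), where $\mathfrak{e}'$ is the central idempotent cutting out the $2$-dimensional block.

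For the first statement, the isomorphism $L[\rho(G)]^G \cong K[G]$ is essentially formal: as noted in the first Example of the paper, $G$ acts on $\rho(G)$ trivially, so $L[\rho(G)]^G = L^G[\rho(G)] = F[\rho(G)] \cong F[G]$; but the coefficient field should be chased more carefully --- in fact the claim $\cong K[G]$ presumably comes from viewing things at the level where one needs $K$, or it is simply a typo and the intended reading is $F[G]$. I would state it as $L[\rho(G)]^G = F[\rho(G)] \cong F[G]$, and then decompose $F[G]$ directly: the four linear characters of $Q_8$ are $F$-valued, contributing $F^4$, and the $2$-dimensional representation of $Q_8$ over $F$ has endomorphism algebra the rational (here $F$-) quaternion algebra $(-1,-1)_F$, so $F[G] \cong F^4 \times (-1,-1)_F$. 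This is the standard Wedderburn decomposition of the group algebra of $Q_8$; one can verify it by exhibiting $u = e(\lambda(\sigma)),\ v = e(\lambda(\tau))$ inside the $2$-dimensional block and checking $u^2 = v^2 = -e$, $uv = -vu$, which follows from $\sigma^2 = \tau^2$ acting as $-1$ on that block and $\sigma,\tau$ anticommuting there.

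For the second statement, the key difference is that $G$ acts on $\lambda(G)$ by conjugation, so $H = L[\lambda(G)]^G$ is no longer a twisted group algebra with trivial coefficient descent. The conjugacy classes of $Q_8$ are $\{1\}$, $\{\sigma^2\}$, $\{\sigma,\sigma^{-1}\}$, $\{\tau,\tau^{-1}\}$, $\{\sigma\tau,(\sigma\tau)^{-1}\}$, so the class sums $1$, $\lambda(\sigma^2)$, $\lambda(\sigma)+\lambda(\sigma^{-1})$, etc., are $G$-fixed and span a $5$-dimensional commutative subalgebra; the remaining three dimensions of $H$ come from pairing the $L$-coefficients of $\lambda(\sigma)-\lambda(\sigma^{-1})$ against an element of $L$ on which $\sigma$ acts by $+1$ and $\tau$ by $-1$, and similarly for the $\tau$ and $\sigma\tau$ classes --- this is exactly where $\alpha,\beta$ and the field $K = F(\alpha,\beta)$ enter. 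Concretely I would take, inside the $2$-dimensional block with central idempotent $\mathfrak{e}$, the elements $\mathfrak{e}$, $\alpha\,\mathfrak{e}(\lambda(\sigma)-\lambda(\sigma^{-1}))$, $\beta\,\mathfrak{e}(\lambda(\tau)-\lambda(\tau^{-1}))$, $\gamma\,\mathfrak{e}(\lambda(\sigma\tau)-\lambda((\sigma\tau)^{-1}))$ for a suitable $\gamma \in \{\pm\alpha\beta\}$, check each is $G$-fixed (using $s(\alpha)=\alpha$, $t(\alpha)=-\alpha$, etc.), and compute the multiplication table. The squares will come out proportional to $a = \alpha^2$ and $b = \beta^2$ with signs forced by $\sigma^2 = \tau^2$ acting as $-1$ on the block, yielding generators $u,v$ with $u^2 = -a$, $v^2 = -b$, $uv = -vu$, i.e.\ the block is $(-a,-b)_F$; together with the $F^4$ from the four linear characters this gives the claimed decomposition.

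The main obstacle is bookkeeping in the second case: getting the scaling factors and signs exactly right so that the $G$-invariant elements close up into a clean presentation of $(-a,-b)_F$ rather than some isomorphic-but-disguised quaternion algebra, and in particular pinning down the third generator for the $\sigma\tau$-class consistently with the relation $\sigma\tau = \tau\sigma^{-1}$. I expect the cleanest route is to first fix the idempotent $\mathfrak{e} = \frac{1}{2}(1-\lambda(\sigma^2))$ (valid since $\mathrm{char}\,F \neq 2$), note that on $\mathfrak{e}L[\lambda(G)]$ the elements $\lambda(\sigma)$ and $\lambda(\tau)$ already satisfy $\lambda(\sigma)^2 = \lambda(\tau)^2 = -\mathfrak{e}$ and anticommute, so $\mathfrak{e}L[\lambda(G)] \cong (-1,-1)_L = \mathrm{Mat}_2(L)$, and then identify $H \cap \mathfrak{e}L[\lambda(G)]$ as the $F$-form cut out by the twisted $G$-action, which is $(-a,-b)_F$ because the descent datum multiplies $u$ by $\alpha$ and $v$ by $\beta$; a short Galois-descent argument then replaces the explicit multiplication table entirely. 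I would present the explicit-basis version for concreteness but remark that the descent argument makes the appearance of $(-a,-b)_F$ transparent.
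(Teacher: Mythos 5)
Your proposal is correct and follows essentially the same route as the paper: central primitive idempotents $e_0,\dots,e_3$ together with $\mathfrak{e}=\tfrac{1}{2}(1-\mu(\sigma^2))$, the two-dimensional block being generated over $F$ by $\mathfrak{e}\rho(s),\mathfrak{e}\rho(t)$ in the classical case and by $\alpha\mathfrak{e}\lambda(s),\beta\mathfrak{e}\lambda(t),\alpha\beta\mathfrak{e}\lambda(st)$ in the canonical non-classical case (your elements $\alpha\mathfrak{e}(\lambda(\sigma)-\lambda(\sigma^{-1}))$ etc.\ coincide with these up to a factor of $2$). You are also right to flag the ``$K[G]$'' in the statement: a dimension count rules out the literal reading, and the paper's proof in fact establishes $L[\rho(G)]^G=F[\rho(G)]\cong F[G]$.
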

\begin{proof}
Let $ \mu \in \{\rho, \lambda \} $. The character table for $\mu(G)$ is
\begin{center}
\begin{tabular}{c|ccccc}
& 1 & $\{\mu(s^2)\}$ & $\{\mu(s), \mu(s^{-1})\}$ & $\{\mu(t), \mu(t^{-1})\}$ & $\{\mu(st), \mu((st)^{-1})\}$\\
\hline
$\chi_0$ & 1 & 1 & 1 & 1 & 1\\
$\chi_1$ & 1 & 1 & 1 & $-1$ & $-1$\\
$\chi_2$ & 1 & 1 & $-1$ & 1 & $-1$\\
$\chi_3$ & 1 & 1 & $-1$ & $-1$ & 1\\
$\psi$ & 2 & $-2$ & 0 & 0 & 0\\
\end{tabular}
\end{center}
%As previously noted the $G$-orbits of $\rho(G)$ are clearly trivial, hence $^g\chi_i=\chi_i$ for all $g\in G$ and $i\in\{0,1,2,3\}$. Since $\psi$ is not realisable over $\mathbb{Q}$  we extend $L$ as in the previous lemmas and fix fields to find the structure for $L[N]^G$.

First we consider the case $ \mu = \rho $, corresponding to the classical Hopf-Galois structure on $ L/F $. For $k=0,1,2,3$, let $ e_k $ be the orthogonal idempotent corresponding to the character $ \chi_{k} $. The idempotent corresponding to the 2-dimensional representation is
\begin{equation*}
e_{\psi}=\frac{1}{2}\Big(1-\rho(s^2)\Big)=\mathfrak{e}.
\end{equation*}
The following is a set of $ 8 $ linearly independent elements of $ L[\rho(G)] $, and each element is fixed by $ G $ since the action of $ G $ on $ \rho(G) $ is trivial. It is therefore a basis of $ L[\rho(G)]^{G} = F[\rho(G)] $ over $ F $: 
\begin{equation*}
\{e_0, e_1, e_2, e_3, \mathfrak{e}, \mathfrak{e}\rho(s), \mathfrak{e}\rho(t), \mathfrak{e}\rho(st)\}.
\end{equation*}
The $ e_{k} $ are orthogonal idempotents, and each is also orthogonal to every element of the set $\{\mathfrak{e}, \mathfrak{e}\rho(s), \mathfrak{e}\rho(t), \mathfrak{e}\rho(st)\}$. This set spans a $ 4 $-dimensional $ F $-algebra, which is isomorphic to the quaternion algebra $ (-1,-1)_{F} $ via the $ F$-algebra isomorphism defined by $\mathfrak{e}\rho(s)\mapsto\,u$, $\mathfrak{e}\rho(t)\mapsto\,v $. Therefore we have the claimed decomposition.

Now we consider the case $ \mu = \lambda $, corresponding to the canonical nonclassical Hopf-Galois structure on $ L/F $. As discussed in \Cref{3.4} the $G$-orbits of $\lambda(G)$ are the conjugacy classes. As above, for $k=0,1,2,3$ let $ e_k $ be the orthogonal idempotent corresponding to the character $ \chi_{k} $, and note that these are fixed by $ G $. The idempotent $\mathfrak{e}$, corresponding to the $2$-dimensional representation of $ \lambda(G) $, is also fixed by $ G $. Now consider the $L$-linearly independent set $\{\mathfrak{e}, \mathfrak{e}\lambda(s), \mathfrak{e}\lambda(t), \mathfrak{e}\lambda(st)\}$. An element of the $F$-algebra generated by this set is of the form
\begin{equation*}
x=a_0\mathfrak{e}+a_1\mathfrak{e}\lambda(s)+a_2\mathfrak{e}\lambda(t)+a_3\mathfrak{e}\lambda(st) \mbox{ with $a_k\in L$ for $ k=0,1,2,3 $.} 
\end{equation*}
The element $x$ is fixed by $G$ if and only if $a_1=a_1^\prime \alpha$, $a_2=a_2^\prime \beta$ and $a_3=a_3^\prime \alpha\beta$ for some $a_0,a_1^\prime, a_2^\prime, a_3^\prime \in F$. Thus the following set is an $ F $-basis of $ L[\lambda(G)]^{G} $:
\begin{equation*}
\{e_0, e_1, e_2, e_3, \mathfrak{e}, \alpha\mathfrak{e}\lambda(s), \beta\mathfrak{e}\lambda(t), \alpha\beta\mathfrak{e}\lambda(st)\}.
\end{equation*}
As above, the $ e_{k} $ are orthogonal to each other and to every element of the set $\{\mathfrak{e}, \alpha\mathfrak{e}\lambda(s), \beta\mathfrak{e}\lambda(t), \alpha\beta\mathfrak{e}\lambda(st)\}$. This set spans a $ 4 $-dimensional $ F $-algebra, which is isomorphic to the quaternion algebra $ (-a,-b)_{F} $ via the $ F$-algebra isomorphism defined by $\alpha\mathfrak{e}\lambda(s)\mapsto\, u, \beta\mathfrak{e}\lambda(t)\mapsto\, v$. Therefore we have the claimed decomposition.
\end{proof}

It may appear that the Hopf algebras giving the classical and canonical non-classical structures are not isomorphic as $F$-algebras. However, we have: 

\begin{lemma} \label{lemma_lambda_rho_quaternion_alg_iso} We have $(-a, -b)_F \cong (-1,-1)_F$ as $ F $-algebras.
\end{lemma}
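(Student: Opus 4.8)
The plan is to exhibit an explicit isomorphism $(-a,-b)_F \xrightarrow{\sim} (-1,-1)_F$, exploiting the fact that the quaternion group structure forces $a$ and $b$ to be of a very special form. Recall that $\alpha,\beta \in K$ satisfy $\alpha^2 = a \in F^\times$, $\beta^2 = b \in F^\times$, and that $K = F(\alpha,\beta)$ is the biquadratic subfield of the quaternionic extension $L/F$. The key observation is the classical fact (going back to Witt) that a biquadratic extension $F(\alpha,\beta)$ embeds in a $Q_8$-extension if and only if the quaternion algebra $(a,b)_F$ is split, equivalently $(a,b)_F \cong \mat_2(F)$. Since $L/F$ is quaternionic by hypothesis, this condition holds, so $(a,b)_F$ is split.

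From here the argument is a short computation with the standard rules for manipulating quaternion symbols. First I would record that for any $x,y,z \in F^\times$ one has the Hilbert symbol identities $(x,y)_F \cong (y,x)_F$, $(x,-x)_F \cong \mat_2(F) \cong (x,1)_F$, and $(x,yz)_F$ being related to $(x,y)_F$ and $(x,z)_F$ in the Brauer group — in particular $(x, y)_F \cong (x, z)_F$ whenever $(x,yz)_F$ is split. I would also use $(-1,-1)_F \cong (-1, y)_F$ whenever $-y$ is a norm from $F(\iota)$, and more pointedly the identity $(x,y)_F \otimes_F (x,z)_F \sim (x, yz)_F$. The concrete strategy: starting from the splitness of $(a,b)_F$, deduce $(-a,-b)_F \sim (-a,-b)_F \otimes (a,b)_F \sim \big((-a)(a), \text{(something)}\big)$-type manipulations, tracking that $(-1,-1)_F$ appears. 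Explicitly, $(-a,-b)_F \sim (a,b)_F \otimes (a,-1)_F \otimes (-1,b)_F \otimes (-1,-1)_F$ in the Brauer group (bilinearity of the symbol in each slot), and since $(a,b)_F$ splits, one must then argue that $(a,-1)_F \otimes (-1,b)_F$ also splits; this last point follows because $(a,-1)_F$ is split precisely when $a$ is a norm from $F(\iota)/F$, and the $Q_8$ hypothesis (via the same Witt criterion applied to the structure of $G$) forces the relevant norm conditions on $a$ and $b$ simultaneously. Once we know $(-a,-b)_F$ and $(-1,-1)_F$ have the same class in $\mathrm{Br}(F)$, they are isomorphic as $F$-algebras since both are $4$-dimensional central simple algebras over $F$.

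The main obstacle will be pinning down exactly which norm conditions on $a$ and $b$ the quaternionic hypothesis supplies, and making sure the Brauer-class bookkeeping genuinely collapses rather than leaving a stray factor. The cleanest route is probably to invoke the precise embedding criterion: $F(\sqrt{a},\sqrt{b})$ embeds in a $Q_8$-extension of $F$ iff the quadratic forms $\langle a,b,ab\rangle$ and $\langle 1,1,1\rangle$ are equivalent over $F$ (equivalently $(a,b)_F$ splits and the discriminant/Hasse conditions match), which immediately gives both $(a,b)_F$ split and the compatibility needed to absorb the $(a,-1)_F$ and $(-1,b)_F$ contributions. I would cite this criterion (it is standard; see e.g. work on embedding problems for $Q_8$) rather than reprove it, and then the rest is a two-line symbol calculation. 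An alternative, more self-contained approach avoiding any external embedding theorem: use the isomorphism $L[\lambda(G)]^G \cong L[\rho(G)]^G$ as $F$-algebras, which would follow if one could produce a direct $F$-algebra isomorphism between these two fixed rings — but since \Cref{3.4} shows they are \emph{not} isomorphic as Hopf algebras, one cannot simply transport structure, so the quaternion-symbol route is the more honest one.
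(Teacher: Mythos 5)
Your overall strategy --- reduce the lemma to Witt's embedding criterion for $ Q_8 $-extensions and then do quaternion-symbol bookkeeping --- is the same idea the paper uses, but the criterion is misstated in a way that breaks the argument. The condition for $ F(\sqrt{a},\sqrt{b}) $ to embed in a $ Q_8 $-extension is \emph{not} that $ (a,b)_F $ is split; it is that the form $ \langle a,b,ab\rangle $ is equivalent to $ \langle 1,1,1\rangle $, or in Brauer-group terms that the single product $ [a,b]\,[a,-1]\,[b,-1] $ is trivial. The individual factors need not vanish: in the paper's own Example with $ a=11 $, $ b=2 $ over $ \Q $, the field $ \Q(\sqrt{11},\sqrt{2}) $ does embed in a quaternionic extension, yet $ (11,2)_{\Q} $ is a division algebra (the Hilbert symbol is $ -1 $ at $ p=2 $ and $ p=11 $). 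Consequently the step ``since $ (a,b)_F $ splits, one must then argue that $ (a,-1)_F\otimes(-1,b)_F $ also splits'' asks for two conditions that are each false in general; only their product with $ [a,b] $ is guaranteed trivial. The same slip reappears in your final paragraph (``which immediately gives both $ (a,b)_F $ split and the compatibility needed''), so the bookkeeping does not collapse as claimed.

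The repair is short, and there are two clean routes. The paper's route: note that $ \langle a,b,ab\rangle $ and $ \langle 1,1,1\rangle $ are precisely the norm forms on the pure quaternions of $ (-a,-b)_F $ and $ (-1,-1)_F $, so Witt's criterion says these ternary forms are isometric, and isometry of pure norm forms is equivalent to isomorphism of the quaternion algebras (\cite[III, Theorem 2.5]{LAM}) --- no Brauer-group manipulation needed. Alternatively, to salvage your symbol calculation: bilinearity gives $ [-a,-b]=[-1,-1]\,[a,b]\,[a,-1]\,[b,-1] $ in $ \mathrm{Br}(F) $, and Witt's criterion kills the last three factors \emph{as a product}, yielding $ [-a,-b]=[-1,-1] $ directly; your closing observation that equal Brauer classes of $ 4 $-dimensional central simple algebras forces isomorphism then finishes the proof. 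Either fix is two lines; the version as written is not correct.
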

\begin{proof}
By a result of Witt \cite[Theorem I.1.1]{JENSEN_YUI}, the fact that $ K = F(\alpha,\beta) $ embeds into a quaternionic extension of $ F $ implies that the quadratic form $ ax_{1}^{2} + bx_{2}^{2}+abx_{3}^{2} $ is equivalent to the quadratic form $ x_{1}^{2}+x_{2}^{2}+x_{3}^{2} $. These are the norm forms of the subspaces of pure quaternions of $ (-a, -b)_{F} $ and $ (-1,-1)_{F} $, respectively. Therefore these subspaces are isometric, and so (see \cite[III, Theorem 2.5]{LAM}) $ (-a, -b)_{F} \cong (-1,-1)_{F} $ as $ F $-algebras. 
\end{proof}

\begin{cor} We have $ L[\rho(G)]^G\cong L[\lambda(G)]^{G} \cong F^4\times (-1,-1)_F $ as $ F $-algebras. 
\end{cor}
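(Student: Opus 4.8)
The plan is to simply assemble the two preceding results. By \Cref{lemma_classical_nonclassical_F_alg} we already have the Artin-Wedderburn decompositions $L[\rho(G)]^G\cong F^4\times(-1,-1)_F$ and $L[\lambda(G)]^G\cong F^4\times(-a,-b)_F$ as $F$-algebras, with the four copies of $F$ arising in both cases from the four degree-one characters $\chi_0,\chi_1,\chi_2,\chi_3$ of the underlying group, and the remaining $4$-dimensional simple factor arising from the idempotent $\mathfrak{e}$ attached to the unique degree-two representation. Then \Cref{lemma_lambda_rho_quaternion_alg_iso} supplies an $F$-algebra isomorphism $(-a,-b)_F\cong(-1,-1)_F$. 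Substituting this into the decomposition of $L[\lambda(G)]^G$ and chaining the isomorphisms gives
\[
L[\rho(G)]^G\;\cong\;F^4\times(-1,-1)_F\;\cong\;F^4\times(-a,-b)_F\;\cong\;L[\lambda(G)]^G,
\]
which is exactly the claim.

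There is essentially no obstacle here: the corollary is a formal consequence of the two lemmas, and the only point worth a sentence is that the $F^4$ factors genuinely coincide (as opposed to merely being abstractly isomorphic semisimple pieces), so that the isomorphism of the quaternion factors can be applied summand-by-summand. Since direct products of $F$-algebras are taken up to isomorphism, this is immediate once the Wedderburn components have been identified as in \Cref{lemma_classical_nonclassical_F_alg}. Thus the proof consists of one line citing \Cref{lemma_classical_nonclassical_F_alg} and \Cref{lemma_lambda_rho_quaternion_alg_iso} and concluding.
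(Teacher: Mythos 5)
Your proposal is correct and is exactly the argument the paper intends: the corollary is stated as an immediate consequence of \Cref{lemma_classical_nonclassical_F_alg} and \Cref{lemma_lambda_rho_quaternion_alg_iso}, with the isomorphism $(-a,-b)_F\cong(-1,-1)_F$ substituted into the Wedderburn decomposition of $L[\lambda(G)]^G$. Nothing further is needed.
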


In fact, this result follows from an unpublished theorem of Greither which states that if $ L/F $ is {\em any} Galois extension of fields then $ F[G] \cong L[\lambda(G)]^{G} $ as $ F $-algebras. See \cite[Theorem 5.2]{TARP2017_2} for more details. 

Finally, we have the Hopf algebras giving the structures of type $D_4$.

\begin{lemma} 
Let $ D_{s,\lambda} $ and $ D_{s,\rho} $ be defined as in \Cref{lemma_Ds}. Then we have 
\begin{equation*}
L[D_{s,\lambda}]^G\cong F^4\times (-a,b)_F \mbox{ as $ F $-algebras}
\end{equation*}
and
\begin{equation*}
L[D_{s,\rho}]^G\cong F^4\times (-1,a)_F \mbox{ as $ F $-algebras}.
\end{equation*}
\end{lemma}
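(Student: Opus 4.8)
The plan is to follow the template of \Cref{lemma_classical_nonclassical_F_alg}, working with the character table of $D_{4}$ in place of its dual group. Since $D_{s,\lambda} \cong D_{s,\rho} \cong D_{4}$, each group has four one-dimensional characters, giving orthogonal central idempotents $e_{0},\dots,e_{3}$, together with a single two-dimensional irreducible character whose central idempotent is $\mathfrak{e} = \frac{1}{2}(1-\rho(\sigma^{2})) = \frac{1}{2}(1-\lambda(\sigma^{2}))$ in both cases. The first step is to observe that the $G$-action on $N$ (conjugation by $\lambda(G)$) is trivial modulo the central subgroup $\langle\rho(\sigma^{2})\rangle$, which lies in $N$ by the Remark following \Cref{lemma_Ds}, and hence is trivial on the abelianisation of $N$; consequently $e_{0},\dots,e_{3}$ and $\mathfrak{e}$ are all fixed by $G$. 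This accounts for the factor $F^{4}$ and reduces the problem to identifying the four-dimensional $F$-algebra $\mathfrak{e}L[N]^{G}$.

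For $N = D_{s,\lambda}$ I would take $\{\mathfrak{e},\,\mathfrak{e}\lambda(s),\,\mathfrak{e}\lambda(t)\rho(s),\,\mathfrak{e}\lambda(st)\rho(s)\}$ as an $L$-basis of $\mathfrak{e}L[D_{s,\lambda}]$, reducing the eight monomials $\mathfrak{e}\eta$ to these by means of the identity $\mathfrak{e}\lambda(\sigma^{2}) = -\mathfrak{e}$ (so, for instance, $\mathfrak{e}\lambda(s^{-1}) = -\mathfrak{e}\lambda(s)$). Tracking the $G$-action on these four elements via the orbit structure recorded in \Cref{3.5} (together with the same sign identity), one finds that $\mathfrak{e}$ is $G$-fixed, that $\mathfrak{e}\lambda(s)$ is fixed by $s$ but negated by $t$, that $\mathfrak{e}\lambda(t)\rho(s)$ is negated by $s$ but fixed by $t$, and that $\mathfrak{e}\lambda(st)\rho(s)$ is negated by both. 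Since $\alpha$, $\beta$, $\alpha\beta$ have exactly these sign behaviours under $s$ and $t$ (by their defining properties at the start of this section), the set $\{\mathfrak{e},\,\alpha\mathfrak{e}\lambda(s),\,\beta\mathfrak{e}\lambda(t)\rho(s),\,\alpha\beta\mathfrak{e}\lambda(st)\rho(s)\}$ is an $F$-basis of $\mathfrak{e}L[D_{s,\lambda}]^{G}$. Putting $U = \alpha\mathfrak{e}\lambda(s)$ and $V = \beta\mathfrak{e}\lambda(t)\rho(s)$, a short computation gives $U^{2} = \alpha^{2}\mathfrak{e}\lambda(\sigma^{2}) = -a\mathfrak{e}$, $V^{2} = \beta^{2}\mathfrak{e} = b\mathfrak{e}$ (as $\lambda(t)\rho(s)$ has order $2$), $UV = \alpha\beta\mathfrak{e}\lambda(st)\rho(s)$, and $VU = -UV$ (because $ts = \sigma^{2}st$ in $Q_{8}$, so that $\mathfrak{e}\lambda(ts)\rho(s) = -\mathfrak{e}\lambda(st)\rho(s)$); hence $\mathfrak{e}L[D_{s,\lambda}]^{G} \cong (-a,b)_{F}$, which is the first assertion.

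The argument for $N = D_{s,\rho}$ is entirely parallel. Here I would use the $L$-basis $\{\mathfrak{e},\,\mathfrak{e}\rho(s),\,\mathfrak{e}\lambda(s)\rho(t),\,\mathfrak{e}\lambda(s)\rho(st)\}$ of $\mathfrak{e}L[D_{s,\rho}]$ (noting $\mathfrak{e}\lambda(s)\rho(st) = \mathfrak{e}\rho(s)\lambda(s)\rho(t)$); by the orbit structure in \Cref{3.5}, $\mathfrak{e}$ and $\mathfrak{e}\rho(s)$ are $G$-fixed, while $\mathfrak{e}\lambda(s)\rho(t)$ and $\mathfrak{e}\lambda(s)\rho(st)$ are both fixed by $s$ and negated by $t$, so multiplying the latter two by $\alpha$ yields an $F$-basis of $\mathfrak{e}L[D_{s,\rho}]^{G}$. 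Writing $U = \mathfrak{e}\rho(s)$ and $V = \alpha\mathfrak{e}\lambda(s)\rho(t)$, one computes $U^{2} = \mathfrak{e}\rho(\sigma^{2}) = -\mathfrak{e}$, $V^{2} = \alpha^{2}\mathfrak{e} = a\mathfrak{e}$, and $UV = -VU$ (again because $\rho(t)\rho(s) = \rho(\sigma^{2})\rho(st)$ in $Q_{8}$), so $\mathfrak{e}L[D_{s,\rho}]^{G} \cong (-1,a)_{F}$ and therefore $L[D_{s,\rho}]^{G} \cong F^{4} \times (-1,a)_{F}$.

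I expect the main obstacle to be the sign bookkeeping. The $Q_{8}$ relations repeatedly convert $s^{-1}$, $t^{-1}$, $ts$, $(st)^{-1}$, and the like into $\sigma^{2}$ times the ``expected'' element, and since $\mathfrak{e}\lambda(\sigma^{2}) = \mathfrak{e}\rho(\sigma^{2}) = -\mathfrak{e}$, each such conversion produces a sign both in the $G$-action and in the multiplication table. Keeping these signs consistent is exactly what pins the non-commutative factors down as $(-a,b)_{F}$ and $(-1,a)_{F}$ (rather than, say, $(a,b)_{F}$ or $(-a,-b)_{F}$), and it is also what makes the choice of scalars from $\{1,\alpha,\beta,\alpha\beta\}$ used to build the $G$-invariant basis somewhat delicate. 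A minor additional point worth spelling out is the claim used in the first step that the $e_{k}$ are $G$-fixed, which holds because the conjugation action of $G$ on $N$ is trivial modulo $N$'s central subgroup of order two, hence trivial on each one-dimensional character.
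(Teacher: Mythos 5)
Your proposal is correct and follows essentially the same route as the paper: the four rational central idempotents attached to the linear characters of $D_{4}$ account for the factor $F^{4}$, and the invariant bases $\{\mathfrak{e},\,\alpha\mathfrak{e}\lambda(s),\,\beta\mathfrak{e}\lambda(t)\rho(s),\,\alpha\beta\mathfrak{e}\lambda(st)\rho(s)\}$ and $\{\mathfrak{e},\,\mathfrak{e}\rho(s),\,\alpha\mathfrak{e}\lambda(s)\rho(t),\,\alpha\mathfrak{e}\lambda(s)\rho(st)\}$, together with the relations you record, are exactly those used in the paper to identify the remaining factors as $(-a,b)_{F}$ and $(-1,a)_{F}$. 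Your abelianisation argument for why the $e_{k}$ are $G$-fixed is a slightly cleaner justification than the paper's appeal to the orbit structure, but the proof is the same in substance.
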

\begin{proof}
The character table for $D_{s,\lambda}$ is the following:
\begin{center}
\begin{tabular}{c|ccccc}
& 1 & $\{\lambda(s^2)\}$ & $\{\lambda(s), \lambda(s^{-1})\}$ & $\{\lambda(t)\rho(s), \lambda(t^{-1})\rho(s)\}$ & $\{\lambda(st)\rho(s), \lambda((st)^{-1})\rho(s)\}$\\
\hline
$\chi_0$ & 1 & 1 & 1 & 1 & 1\\
$\chi_1$ & 1 & 1 & 1 & $-1$ & $-1$\\
$\chi_2$ & 1 & 1 & $-1$ & 1 & $-1$\\
$\chi_3$ & 1 & 1 & $-1$ & $-1$ & 1\\
$\psi$ & 2 & $-2$ & 0 & 0 & 0\\
\end{tabular}
\end{center}

As in the proof of \Cref{lemma_classical_nonclassical_F_alg}, for $k=0,1,2,3$ let $ e_k $ be the orthogonal idempotent corresponding to the character $ \chi_{k} $, and note that the idempotent corresponding to the 2-dimensional representation is $ \mathfrak{e} $. Recall from \Cref{3.5} that the non-trivial $G$-orbits of $D_{s,\lambda}$ are 
\begin{equation*}
\{\lambda(s),\lambda(s^{-1})\}, \{\lambda(t)\rho(s),\lambda(t^{-1})\rho(s)\}\text{, and }\{\lambda(st)\rho(s), \lambda((st)^{-1})\rho(s)\},
\end{equation*}
with stabilisers $\langle s\rangle$, $\langle t \rangle$, and $\langle st \rangle$ respectively. Hence each $ e_{k} $ is fixed by $ G $. Now consider the $L$-linearly independent set $\{\mathfrak{e}, \mathfrak{e}\lambda(s), \mathfrak{e}\lambda(t)\rho(s), \mathfrak{e}\lambda(st)\rho(s)\}$. An element of the $F$-algebra generated by these elements is of the form
\begin{equation*}
x=a_0 \mathfrak{e}+a_1 \mathfrak{e}\lambda(s)+a_2 \mathfrak{e}\lambda(t)\rho(s)+a_3 \mathfrak{e}\lambda(st)\rho(s) \mbox{ with $a_k\in L$ for $ k=0,1,2,3 $.} 
\end{equation*}
The element $ x $ is fixed by $ G $ if and only if $a_1=a_1^\prime \alpha$, $a_2=a_2^\prime \beta$ and $a_3=a_3^\prime \alpha\beta$ for some $a_0, a_1^\prime, a_2^\prime, a_3^\prime\in F$. The set 
\begin{equation*}
\{e_0, e_1, e_2, e_3, \mathfrak{e}, \alpha \mathfrak{e}\lambda(s), \beta\mathfrak{e}\lambda(t)\rho(s), \alpha\beta\mathfrak{e}\lambda(st)\rho(s)\}
\end{equation*}
This set is therefore an $ F $-basis of $ L[D_{s,\lambda}]^{G} $. The $ e_{k} $ are orthogonal to each other and to every element of the set $\{ \mathfrak{e}, \alpha \mathfrak{e}\lambda(s), \beta\mathfrak{e}\lambda(t)\rho(s), \alpha\beta\mathfrak{e}\lambda(st)\rho(s) \}$. This set spans a $ 4 $-dimensional $ F $-algebra,
which is isomorphic to the quaternion algebra $ (-a,b)_{F} $ via the $ F$-algebra isomorphism defined by $\alpha\mathfrak{e}\lambda(s)\mapsto\, u, \beta\mathfrak{e}\lambda(t)\rho(s)\mapsto\, v$. Therefore we have the claimed decomposition. 

%\begin{center}
%\begin{tabular}{c|cccc}
%& $\mathfrak{e}$ & $\mathfrak{e}_\alpha$ & $\mathfrak{e}_\beta$ & $\mathfrak{e}_{\alpha\beta}$\\
%\hline
%$\mathfrak{e}$ & $\mathfrak{e}$ & $\mathfrak{e}_\alpha$ & $\mathfrak{e}_\beta$ & $\mathfrak{e}_{\alpha\beta}$\\
%$\mathfrak{e}_\alpha$ & $\mathfrak{e}_\alpha$ & $-\alpha^2\mathfrak{e}$ & $\mathfrak{e}_{\alpha\beta}$ & $-\alpha^2\mathfrak{e}_\beta$\\
%$\mathfrak{e}_\beta$ & $\mathfrak{e}_\beta$ & $-\mathfrak{e}_{\alpha\beta}$ & $\beta^2\mathfrak{e}$ & $-\beta^2\mathfrak{e}_\alpha$\\
%$\mathfrak{e}_{\alpha\beta}$ & $\mathfrak{e}_{\alpha\beta}$ & $\alpha^2\mathfrak{e}_\beta$ & $\beta^2\mathfrak{e}_\alpha$ & $(\alpha\beta)^2\mathfrak{e}$\\
%\end{tabular}
%\end{center}
%From this table we see that $L[D_{s,\lambda}]^G \cong F^{4} \times (-a,b)_{F} $ as $ F $-algebras.

We determine the structure of  $L[D_{s,\rho}]^{G}$ by essentially the same method, and so we omit some of the details. In notation analogous to that employed above, we find that the set
\begin{equation*}
\{e_0,e_1,e_2,e_3,\mathfrak{e},\mathfrak{e}\rho(s),\alpha\mathfrak{e}\lambda(s)\rho(t),\alpha\mathfrak{e}\lambda(s)\rho(st)\}
\end{equation*}
is an $ F $-basis of $ L[D_{s,\lambda}]^{G} $. The final four elements span a $ 4 $-dimensional $ F $-algebra, which is isomorphic to the quaternion algebra $ (-1,a)_{F} $ via the $ F$-algebra isomorphism defined by $\mathfrak{e}\rho(s)\mapsto\, u, \alpha\mathfrak{e}\lambda(s)\rho(t)\mapsto\, v$. Therefore we have the claimed decomposition.

\end{proof}

As in the case of the Hopf algebras giving the Hopf-Galois structures of $ Q_{8} $ type, some of the quaternion algebras appearing in the decompositions above are isomorphic:

\begin{lemma}
We have $ (-a,b)_F \cong (-1,a)_F $ as $ F $-algebras. 
\end{lemma}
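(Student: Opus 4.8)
The plan is to compare the two algebras via their classes in the Brauer group $\mathrm{Br}(F)$. Recall first that a quaternion algebra over $F$ is either split (isomorphic to $M_{2}(F)$) or a division algebra, and that Brauer-equivalent division algebras of the same $F$-dimension are isomorphic; hence two quaternion algebras over $F$ are isomorphic as $F$-algebras if and only if they determine the same class in $\mathrm{Br}(F)$. Write $\{x,y\}$ for the class of $(x,y)_{F}$ in $\mathrm{Br}(F)$. For the convention fixed above (namely $u^{2}=x$, $v^{2}=y$) the symbol $\{x,y\}$ is symmetric and biadditive, and $\{x,-x\}=0$ — the last because $-x$ is the norm of $\sqrt{x}$ from $F(\sqrt{x})$, so $(x,-x)_{F}$ splits. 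Consequently $\{x,c^{2}\}=0$ for every $c\in F^{\times}$, and therefore $\{x,-y^{2}\}=\{x,-1\}=\{-1,x\}$ for every $y\in F^{\times}$.

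The only substantive input needed is \Cref{lemma_lambda_rho_quaternion_alg_iso}, which gives $(-a,-b)_{F}\cong(-1,-1)_{F}$, that is $\{-a,-b\}=\{-1,-1\}$. Using biadditivity of the symbol in its second argument I then compute
\[ \{-a,b\}+\{-a,-b\}=\{-a,\,-b^{2}\}=\{-a,-1\}=\{-1,-a\}=\{-1,-1\}+\{-1,a\}. \]
Substituting $\{-a,-b\}=\{-1,-1\}$ on the left and cancelling this common term leaves $\{-a,b\}=\{-1,a\}$; since both sides are classes of quaternion algebras, the remark of the first paragraph yields $(-a,b)_{F}\cong(-1,a)_{F}$ as $F$-algebras.

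There is no real obstacle here; the only point requiring attention is to quote the symbol relations in the form matching the sign convention for $(x,y)_{F}$ fixed above, and then to keep track of signs through the short computation. If one preferred to avoid the Brauer group, one could instead argue as in the proof of \Cref{lemma_lambda_rho_quaternion_alg_iso}: the norm forms on the spaces of pure quaternions of $(-a,b)_{F}$ and $(-1,a)_{F}$ are $\langle a,-b,-ab\rangle$ and $\langle 1,-a,-a\rangle$ respectively, and their isometry can be deduced from Witt's relation $\langle a,b,ab\rangle\cong\langle 1,1,1\rangle$ together with Witt cancellation; but the Brauer-group computation above is shorter, so that is the route I would present.
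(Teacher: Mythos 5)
Your proposal is correct and follows essentially the same route as the paper: both use \Cref{lemma_lambda_rho_quaternion_alg_iso} to get $[-a,-b]=[-1,-1]$ in $\mathrm{Br}(F)$, then apply bilinearity of the symbol to compute $[-a,b][-a,-b]=[-a,-1]=[-1,a][-1,-1]$ and cancel. Your additional justifications (why Brauer equivalence of quaternion algebras implies isomorphism, and the sketched quadratic-form alternative) are fine but not needed beyond what the paper records.
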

\begin{proof}
Write $ [-a, -b], [-1, a] $ for the classes of $ (-a,b)_F,  (-1,a)_F $ in the Brauer group $ \mathrm{Br}(F) $. It is sufficient to show that $ [-a, -b]= [-1, a] $. We refer to \cite[Chapters III and IV]{LAM} for properties of quaternion algebras over $ F $ and their classes in $ \mathrm{Br}(F) $. Using the result of \Cref{lemma_lambda_rho_quaternion_alg_iso} we have $ [-a, -b] = [-1,-1] $, and so in $ \mathrm{Br}(F) $ we have
\begin{eqnarray*}
[-a, b] [-a, -b] & = & [-a, -b^{2}] \mbox{ by \cite[III, Theorem 2.11]{LAM} } \\
& = & [-a,-1] \mbox{ by \cite[III, Proposition 1.1]{LAM}}\\
& = & [-1,-a] \\
& = & [-1,a][-1,-1] \mbox{ by \cite[III, Theorem 2.11]{LAM} } \\
& = & [-1,a][-a,-b]. 
\end{eqnarray*}
Cancelling $ [-a,-b] $, we obtain $ [-a,b] = [-1,a] = [a,-1] $, as claimed. Therefore  $ (-a,b)_F \cong (-1,a)_F $ as $ F $-algebras. 
\end{proof}

\begin{cor} We have 
\begin{equation*}
L[D_{s,\rho}]^G\cong L[D_{s,\lambda}]^G\cong F^4\times (-1,a)_F \mbox{ as $ F $-algebras.}
\end{equation*}
\end{cor}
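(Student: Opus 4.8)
The plan is to obtain this corollary with essentially no new work, by assembling the two results that immediately precede it. First I would recall from the previous lemma the two $F$-algebra decompositions
\[
L[D_{s,\lambda}]^G \cong F^4 \times (-a,b)_F, \qquad L[D_{s,\rho}]^G \cong F^4 \times (-1,a)_F,
\]
and from the lemma just above it the isomorphism of $F$-algebras $(-a,b)_F \cong (-1,a)_F$. Combining these, I would substitute the quaternion-algebra isomorphism into the first decomposition to obtain
\[
L[D_{s,\lambda}]^G \cong F^4 \times (-a,b)_F \cong F^4 \times (-1,a)_F \cong L[D_{s,\rho}]^G,
\]
which is precisely the asserted chain of isomorphisms. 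The only formal point is that an isomorphism $A \cong B$ of $F$-algebras induces an isomorphism $F^4 \times A \cong F^4 \times B$ by taking the direct product with the identity map on $F^4$; this is immediate and warrants at most a phrase.

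Consequently there is no genuine obstacle here: the substantive content lives entirely in the preceding lemma (the explicit $L$-bases, the passage to $G$-invariants, and the multiplication tables identifying the simple components) and in the Brauer-group computation establishing $(-a,b)_F \cong (-1,a)_F$, both of which may simply be invoked. It may be worth closing with the observation, parallel to the $Q_8$-type case treated earlier in the section, that the Hopf algebras $L[D_{s,\lambda}]^G$ and $L[D_{s,\rho}]^G$ are \emph{not} isomorphic as Hopf algebras --- by \Cref{3.5} --- yet become isomorphic once the coalgebra structure is forgotten, mirroring the phenomenon already recorded for the classical and canonical non-classical structures.
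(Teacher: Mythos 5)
Your proposal is correct and matches the paper's intent exactly: the corollary is stated without proof precisely because it follows immediately by combining the Artin--Wedderburn decompositions from the preceding lemma with the isomorphism $(-a,b)_F \cong (-1,a)_F$ established just before it. Nothing further is needed.
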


In order to better understand the $ F $-algebra structure of the Hopf algebras $ L[D_{s,\rho}]^G $, we investigate the relationships between $ (-1,a)_{F}, (-1,b)_{F} $ and $ (-1,ab)_{F} $. 

\begin{lemma} \label{lemma_qalg_trivial_vs_iso}
Let $ x,y \in \{ a,b, ab \} $ with $ x \neq y $. Then we have $ (-1,x)_{F} \cong (-1,xy)_{F} $ as $ F $-algebras if and only if $ (-1,y)_{F} \cong M_{2}(F) $ as $ F $-algebras. 
\end{lemma}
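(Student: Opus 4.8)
The plan is to work entirely in the Brauer group $\mathrm{Br}(F)$, using the fact that a quaternion algebra $(-1,z)_F$ is split (isomorphic to $M_2(F)$) if and only if its class $[-1,z]$ is trivial, and that quaternion algebra classes are $2$-torsion. First I would translate the statement: $(-1,x)_F \cong (-1,xy)_F$ as $F$-algebras if and only if $[-1,x] = [-1,xy]$ in $\mathrm{Br}(F)$, using the fact (cf. \cite[III, Theorem 2.5]{LAM} or the reasoning of \Cref{lemma_lambda_rho_quaternion_alg_iso}) that two quaternion algebras over $F$ are isomorphic precisely when they are Brauer-equivalent.

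The core computation is then a one-line manipulation using bimultiplicativity of the symbol $[-1,-]$ together with $2$-torsion: by \cite[III, Theorem 2.11]{LAM} we have $[-1,x][-1,y] = [-1,xy]$, so
\[
[-1,x] = [-1,xy] \iff [-1,x][-1,xy] = 0 \iff [-1,x][-1,x][-1,y] = 0 \iff [-1,y] = 0,
\]
where the last step uses $[-1,x]^2 = [-1,x][-1,x] = 0$ since every class in $\mathrm{Br}(F)$ represented by a quaternion algebra has order dividing $2$ (\cite[III, Proposition 1.1]{LAM} gives $[-1,x][-1,x] = [-1,x^2] = 0$). Finally $[-1,y] = 0$ in $\mathrm{Br}(F)$ is equivalent to $(-1,y)_F$ being split, i.e. $(-1,y)_F \cong M_2(F)$.

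I would then note that the hypothesis $x,y \in \{a,b,ab\}$ with $x \neq y$ is only used to guarantee that $xy \in \{a,b,ab\}$ as well (since $a \cdot b = ab$, $a \cdot ab = a^2 b \equiv b$, $b \cdot ab \equiv a$ modulo squares, and squares are invisible in these quaternion symbols by \cite[III, Proposition 1.1]{LAM}), so that all three algebras in the statement are among the ones under discussion; strictly the Brauer-group argument works for any $x,y \in F^\times$. The main obstacle — really the only subtlety — is making sure the passage between ``isomorphic as $F$-algebras'' and ``equal in $\mathrm{Br}(F)$'' is correctly invoked for quaternion algebras, and that the $2$-torsion identity $[-1,x]^2 = 0$ is cited cleanly rather than re-derived; everything else is formal. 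I expect the write-up to be only a few lines, closely mirroring the display computation in the preceding \Cref{lemma_lambda_rho_quaternion_alg_iso} and the lemma comparing $(-a,b)_F$ with $(-1,a)_F$.
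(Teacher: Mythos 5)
Your proposal is correct and follows essentially the same route as the paper: both reduce the statement to the identity $[-1,xy]=[-1,x][-1,y]$ in $\mathrm{Br}(F)$ and conclude that $[-1,x]=[-1,xy]$ if and only if $[-1,y]$ is trivial, which for a $4$-dimensional algebra means $(-1,y)_F\cong M_2(F)$. Your write-up merely makes explicit the $2$-torsion cancellation and the equivalence between $F$-algebra isomorphism and Brauer equivalence for quaternion algebras, both of which the paper uses implicitly.
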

\begin{proof}
In $ \mathrm{Br}(F) $ we have $ [-1,xy]=[-1,x][-1,y] $, so $ [-1,x]=[-1,xy] $ if and only if $ [-1,y] = [-1,1] $. That is, $ (-1,x)_{F} \cong (-1,xy)_{F} $ as $ F $-algebras if and only if   $ (-1,y)_{F} \cong (-1,1)_{F} \cong M_{2}(F) $ as $ F $-algebras. 
\end{proof}

\Cref{lemma_qalg_trivial_vs_iso} suggests three scenarios for the quaternion algebras $ (-1,a)_{F}, (-1,b)_{F} $ and $ (-1,ab)_{F} $: all three are isomorphic to matrix rings, exactly one is isomorphic to a matrix ring and the other two are isomorphic to the same division algebra, or each is isomorphic to a distinct division algebra. We conclude with examples illustrating that each of these three cases does occur. 

\begin{example}
Suppose that $ -1 $ is a square in $ F $. Then for $ x \in \{ a,b,ab \} $ we have that $ -1 $ occurs as the norm of an element of the field $ F(x) $, and so $ (-1,x)_{F} \cong (-1,1)_{F} \cong M_{2}(F) $ \cite[Proposition I.1.6]{JENSEN_YUI}. Therefore in this case we have 
\begin{equation*}
L[D_{s,\rho}]^G \cong L[D_{t,\rho}]^G \cong L[D_{st,\rho}]^G \cong F^{4} \times M_{2}(F) 
\end{equation*}
as $ F $-algebras. 
\end{example}

\begin{example}
Let $ F=\Q $, $ \alpha = \sqrt{11} $, $\beta = \sqrt{2} $. Then by \cite{FUJISAKI} $ K=\Q(\alpha, \beta) $ can be embedded in a quaternionic extension $ L $ of $ \Q $.  In this case we have $ (-1,b)_{\Q} \cong (-1,1)_{\Q} \cong M_{2}(\Q) $ a $ \Q $-algebras since $ 2 $ is the norm of the element $ 1+i \in \Q(i) $, and so by \Cref{lemma_qalg_trivial_vs_iso} we have $ (-1,a)_{\Q} \cong (-1,ab)_{\Q} $ as $ \Q $-algebras. However, $ (-1,a)_{\Q} \not \cong M_{2}(\Q) $, since no element of $ \Q(i) $ has norm $ 11 $. Therefore in this case we have $ L[D_{t,\rho}]^G \cong \Q^{4} \times M_{2}(\Q) $ and
\begin{equation*}
L[D_{s,\rho}]^G  \cong L[D_{st,\rho}]^G \cong F^{4} \times (-1,a) \not \cong F^{4} \times M_{2}(\Q) 
\end{equation*}
as $ \Q $-algebras. 
\end{example}

\begin{example}
Let $ F = \Q $, $ \alpha = \sqrt{11} $, $ \beta = \sqrt{6} $. Then by \cite[Example 4.4]{VAUGHAN} $ K =  \Q(\alpha,\beta) $ can be embedded in a quaternionic extension $ L $ of $ \Q $.  In this case none of $ (-1,a)_{\Q}, (-1,b)_{\Q}, (-1,ab)_{\Q} $ is isomorphic to $ M_{2}(\Q) $ as a $ \Q $-algebra, since none of $ 6,11,66 $ occurs as the norm of an element of $ \Q(i) $.  Therefore by \Cref{lemma_qalg_trivial_vs_iso} these quaternion algebras are all nonisomorphic as $ \Q $-algebras, and so we have
\begin{equation*}
L[D_{s,\rho}]^G  \not \cong L[D_{t,\rho}]^G \not \cong L[D_{st,\rho}]^G
\end{equation*}
as $ \Q $-algebras.  
\end{example}

\end{document}